\def\({\left(}
\def\){\right)}
\newtheorem{theorem}{Theorem}[section]
\newtheorem{corollary}[theorem]{Corollary}
\newtheorem{proposition}{Proposition}
\newtheorem{conjecture}{Conjecture}
\newtheorem{lemma}[theorem]{Lemma}
\theoremstyle{definition}
\newtheorem{remark}{Remark}
\newskip\Einheit \Einheit=0.6cm
\newdimen\xdim \newdimen\ydim \newdimen\PfadD@cke \newdimen\Pfadd@cke
\def\PfadDicke#1{\PfadD@cke#1 \divide\PfadD@cke by2 \Pfadd@cke\PfadD@cke \multiply\PfadD@cke by2}
\long\def\LOOP#1\REPEAT{\def\BODY{#1}\ITERATE}
\def\ITERATE{\BODY \let\next\ITERATE \else\let\next\relax\fi \next}
\let\REPEAT=\fi
\def\Punkt{\hbox{\raise-2pt\hbox to0pt{\hss\scriptsize$\bullet$\hss}}}
\def\DuennPunkt(#1,#2){\unskip
  \raise#2 \Einheit\hbox to0pt{\hskip#1 \Einheit
          \raise-2.5pt\hbox to0pt{\hss\normalsize$\bullet$\hss}\hss}}
\def\NormalPunkt(#1,#2){\unskip
  \raise#2 \Einheit\hbox to0pt{\hskip#1 \Einheit
          \raise-3pt\hbox to0pt{\hss\large$\bullet$\hss}\hss}}
\def\DickPunkt(#1,#2){\unskip
  \raise#2 \Einheit\hbox to0pt{\hskip#1 \Einheit
          \raise-4pt\hbox to0pt{\hss\Large$\bullet$\hss}\hss}}
\def\Kreis(#1,#2){\unskip
  \raise#2 \Einheit\hbox to0pt{\hskip#1 \Einheit
          \raise-4pt\hbox to0pt{\hss\Large$\circ$\hss}\hss}}
\def\Diagonale(#1,#2)#3{\unskip\leavevmode
  \xcoord#1\relax \ycoord#2\relax
      \raise\ycoord \Einheit\hbox to0pt{\hskip\xcoord \Einheit
         \unitlength\Einheit
         \line(1,1){#3}\hss}}
\def\AntiDiagonale(#1,#2)#3{\unskip\leavevmode
  \xcoord#1\relax \ycoord#2\relax %\advance\xcoord by -0.05\relax
      \raise\ycoord \Einheit\hbox to0pt{\hskip\xcoord \Einheit
         \unitlength\Einheit
\tile         \line(1,-1){#3}\hss}}
\def\Pfad(#1,#2),#3\endPfad{\unskip\leavevmode
  \xcoord#1 \ycoord#2 \thicklines\ZeichnePfad#3\endPfad\thinlines}
\def\ZeichnePfad#1{\ifx#1\endPfad\let\next\relax
  \else\let\next\ZeichnePfad
    \ifnum#1=1
      \raise\ycoord \Einheit\hbox to0pt{\hskip\xcoord \Einheit
         \vrule height\Pfadd@cke width1 \Einheit depth\Pfadd@cke\hss}%
      \advance\xcoord by 1
    \else\ifnum#1=2
      \raise\ycoord \Einheit\hbox to0pt{\hskip\xcoord \Einheit
        \hbox{\hskip-\PfadD@cke\vrule height1 \Einheit width\PfadD@cke depth0pt}\hss}%
      \advance\ycoord by 1
    \else\ifnum#1=3
      \raise\ycoord \Einheit\hbox to0pt{\hskip\xcoord \Einheit
         \unitlength\Einheit
         \line(1,1){1}\hss}
      \advance\xcoord by 1
      \advance\ycoord by 1
    \else\ifnum#1=4
      \raise\ycoord \Einheit\hbox to0pt{\hskip\xcoord \Einheit
         \unitlength\Einheit
         \line(1,-1){1}\hss}
      \advance\xcoord by 1
      \advance\ycoord by -1
    \else\ifnum#1=5
      \advance\xcoord by -1
      \raise\ycoord \Einheit\hbox to0pt{\hskip\xcoord \Einheit
         \vrule height\Pfadd@cke width1 \Einheit depth\Pfadd@cke\hss}%
    \else\ifnum#1=6
      \advance\ycoord by -1
      \raise\ycoord \Einheit\hbox to0pt{\hskip\xcoord \Einheit
        \hbox{\hskip-\PfadD@cke\vrule height1 \Einheit width\PfadD@cke depth0pt}\hss}%
    \else\ifnum#1=7
      \advance\xcoord by -1
      \advance\ycoord by -1
      \raise\ycoord \Einheit\hbox to0pt{\hskip\xcoord \Einheit
         \unitlength\Einheit
         \line(1,1){1}\hss}
    \else\ifnum#1=8
      \advance\xcoord by -1
      \advance\ycoord by +1
      \raise\ycoord \Einheit\hbox to0pt{\hskip\xcoord \Einheit
         \unitlength\Einheit
         \line(1,-1){1}\hss}
    \fi\fi\fi\fi
    \fi\fi\fi\fi
  \fi\next}
\def\hSSchritt{\leavevmode\raise-.4pt\hbox to0pt{\hss.\hss}\hskip.2\Einheit
  \raise-.4pt\hbox to0pt{\hss.\hss}\hskip.2\Einheit
  \raise-.4pt\hbox to0pt{\hss.\hss}\hskip.2\Einheit
  \raise-.4pt\hbox to0pt{\hss.\hss}\hskip.2\Einheit
  \raise-.4pt\hbox to0pt{\hss.\hss}\hskip.2\Einheit}
\def\vSSchritt{\vbox{\baselineskip.2\Einheit\lineskiplimit0pt
\hbox{.}\hbox{.}\hbox{.}\hbox{.}\hbox{.}}}
\def\DSSchritt{\leavevmode\raise-.4pt\hbox to0pt{%
  \hbox to0pt{\hss.\hss}\hskip.2\Einheit
  \raise.2\Einheit\hbox to0pt{\hss.\hss}\hskip.2\Einheit
  \raise.4\Einheit\hbox to0pt{\hss.\hss}\hskip.2\Einheit
  \raise.6\Einheit\hbox to0pt{\hss.\hss}\hskip.2\Einheit
  \raise.8\Einheit\hbox to0pt{\hss.\hss}\hss}}
\def\dSSchritt{\leavevmode\raise-.4pt\hbox to0pt{%
  \hbox to0pt{\hss.\hss}\hskip.2\Einheit
  \raise-.2\Einheit\hbox to0pt{\hss.\hss}\hskip.2\Einheit
  \raise-.4\Einheit\hbox to0pt{\hss.\hss}\hskip.2\Einheit
  \raise-.6\Einheit\hbox to0pt{\hss.\hss}\hskip.2\Einheit
  \raise-.8\Einheit\hbox to0pt{\hss.\hss}\hss}}
\def\SPfad(#1,#2),#3\endSPfad{\unskip\leavevmode
  \xcoord#1 \ycoord#2 \ZeichneSPfad#3\endSPfad}
\def\ZeichneSPfad#1{\ifx#1\endSPfad\let\next\relax
  \else\let\next\ZeichneSPfad
    \ifnum#1=1
      \raise\ycoord \Einheit\hbox to0pt{\hskip\xcoord \Einheit
         \hSSchritt\hss}%
      \advance\xcoord by 1
    \else\ifnum#1=2
      \raise\ycoord \Einheit\hbox to0pt{\hskip\xcoord \Einheit
        \hbox{\hskip-2pt \vSSchritt}\hss}%
      \advance\ycoord by 1
    \else\ifnum#1=3
      \raise\ycoord \Einheit\hbox to0pt{\hskip\xcoord \Einheit
         \DSSchritt\hss}
      \advance\xcoord by 1
      \advance\ycoord by 1
    \else\ifnum#1=4
      \raise\ycoord \Einheit\hbox to0pt{\hskip\xcoord \Einheit
         \dSSchritt\hss}
      \advance\xcoord by 1
      \advance\ycoord by -1
    \else\ifnum#1=5
      \advance\xcoord by -1
      \raise\ycoord \Einheit\hbox to0pt{\hskip\xcoord \Einheit
         \hSSchritt\hss}%
    \else\ifnum#1=6
      \advance\ycoord by -1
      \raise\ycoord \Einheit\hbox to0pt{\hskip\xcoord \Einheit
        \hbox{\hskip-2pt \vSSchritt}\hss}%
    \else\ifnum#1=7
      \advance\xcoord by -1
      \advance\ycoord by -1
      \raise\ycoord \Einheit\hbox to0pt{\hskip\xcoord \Einheit
         \DSSchritt\hss}
    \else\ifnum#1=8
      \advance\xcoord by -1
      \advance\ycoord by 1
      \raise\ycoord \Einheit\hbox to0pt{\hskip\xcoord \Einheit
         \dSSchritt\hss}
    \fi\fi\fi\fi
    \fi\fi\fi\fi
  \fi\next}
\def\Koordinatenachsen(#1,#2){\unskip
 \hbox to0pt{\hskip-.5pt\vrule height#2 \Einheit width.5pt depth1 \Einheit}%
 \hbox to0pt{\hskip-1 \Einheit \xcoord#1 \advance\xcoord by1
    \vrule height0.25pt width\xcoord \Einheit depth0.25pt\hss}}
\def\Koordinatenachsen(#1,#2)(#3,#4){\unskip
 \hbox to0pt{\hskip-.5pt \ycoord-#4 \advance\ycoord by1
    \vrule height#2 \Einheit width.5pt depth\ycoord \Einheit}%
 \hbox to0pt{\hskip-1 \Einheit \hskip#3\Einheit 
    \xcoord#1 \advance\xcoord by1 \advance\xcoord by-#3 
    \vrule height0.25pt width\xcoord \Einheit depth0.25pt\hss}}
\def\Gitter(#1,#2){\unskip \xcoord0 \ycoord0 \leavevmode
  \LOOP\ifnum\ycoord<#2
    \loop\ifnum\xcoord<#1
      \raise\ycoord \Einheit\hbox to0pt{\hskip\xcoord \Einheit\Punkt\hss}%
      \advance\xcoord by1
    \repeat
    \xcoord0
    \advance\ycoord by1
  \REPEAT}
\def\Gitter(#1,#2)(#3,#4){\unskip \xcoord#3 \ycoord#4 \leavevmode
  \LOOP\ifnum\ycoord<#2
    \loop\ifnum\xcoord<#1
      \raise\ycoord \Einheit\hbox to0pt{\hskip\xcoord \Einheit\Punkt\hss}%
      \advance\xcoord by1
    \repeat
    \xcoord#3
    \advance\ycoord by1
  \REPEAT}
\def\Label#1#2(#3,#4){\unskip \xdim#3 \Einheit \ydim#4 \Einheit
  \def\lo{\advance\xdim by-.5 \Einheit \advance\ydim by.5 \Einheit}%
  \def\llo{\advance\xdim by-.25cm \advance\ydim by.5 \Einheit}%
  \def\loo{\advance\xdim by-.5 \Einheit \advance\ydim by.25cm}%
  \def\o{\advance\ydim by.25cm}%
  \def\ro{\advance\xdim by.5 \Einheit \advance\ydim by.5 \Einheit}%
  \def\rro{\advance\xdim by.25cm \advance\ydim by.5 \Einheit}%
  \def\roo{\advance\xdim by.5 \Einheit \advance\ydim by.25cm}%
  \def\l{\advance\xdim by-.30cm}%
  \def\r{\advance\xdim by.30cm}%
  \def\lu{\advance\xdim by-.5 \Einheit \advance\ydim by-.6 \Einheit}%
  \def\llu{\advance\xdim by-.25cm \advance\ydim by-.6 \Einheit}%
  \def\luu{\advance\xdim by-.5 \Einheit \advance\ydim by-.30cm}%
  \def\u{\advance\ydim by-.30cm}%
  \def\ru{\advance\xdim by.5 \Einheit \advance\ydim by-.6 \Einheit}%
  \def\rru{\advance\xdim by.25cm \advance\ydim by-.6 \Einheit}%
  \def\ruu{\advance\xdim by.5 \Einheit \advance\ydim by-.30cm}%
  #1\raise\ydim\hbox to0pt{\hskip\xdim
     \vbox to0pt{\vss\hbox to0pt{\hss$#2$\hss}\vss}\hss}%
}
\begin{document}

\title{Yay for determinants!}

%\subjclass xxx \endsubjclass

%    Information for the author
\author[T. Amdeberhan]{Tewodros Amdeberhan}
\address{Department of Mathematics,
Tulane University, New Orleans, LA 70118, USA}
\email{tamdeber@tulane.edu}
\author[C. Koutschan]{Christoph Koutschan}
\address{Johann Radon Institute for Computational and Applied Mathematics, Austrian Academy of Sciences, Altenberger Strasse 69, 4040 Linz, Austria}
\email{christoph.koutschan@oeaw.ac.at}
\author[D. Zeilberger]{Doron Zeilberger}
\address{Department of Mathematics, Rutgers University (New Brunswick), \\
Hill Center-Busch Campus, 110 Frelinghuysen Rd., Piscataway, NJ 08854-8019, USA}
\email{DoronZeil@gmail.com}

\begin{abstract} In this {\it case study}, we hope to show why Sheldon Axler was not just wrong, but {\em wrong}, when he urged, in 1995:  ``Down with Determinants!''.
We first recall how determinants are useful in enumerative combinatorics, and then illustrate
three versatile tools (Dodgson's condensation, the holonomic ansatz and constant term evaluations) to operate in tandem to prove a certain intriguing determinantal formula conjectured by the first author. 
We conclude with a postscript describing yet another, much more efficient, method for evaluating determinants: `ask determinant-guru, Christian Krattenthaler', but advise people
only to use it as a last resort, since if we would have used this last method right away, we would not have had the fun of doing it all by ourselves.
\end{abstract}

\maketitle
{\em Dedicated to Maestro\footnote{in piano, determinants, and
  hypergeometrics, inter alia.}
 Christian Krattenthaler
(b. 8 Oct., 1958) on his 

turning millionth-and-one years old.}

\bigskip
\noindent
{\em ``Philosophers and psychiatrists should explain why it is that we mathematicians are in the habit of systematically erasing our footsteps. Scientists have always looked askance at this strange habit of mathematicians, which has changed little from Pythagoras to our day}".
--Gian-Carlo Rota.\footnote{Two Turning Points in Invariant Theory, Math. Intell. 21(1) (Winter 1999), p. 26.}

\section{The Joy of Determinants}

\smallskip
\noindent
In 1995, Sheldon Axler published an article \cite{Axler}  with a very catchy title (that being determinantal lovers, we {\it love to hate}): `Down with Determinants!' 
This admittedly well-written paper was well received and even won the prestigious Lester Ford award for that year. Let's quote the first paragraph.

{\it
``Ask anyone why a square matrix of complex numbers has an eigenvalue, 

and you'll probably get the wrong answer, which goes something like this: 

The characteristic polynomial of the matrix---which is defined via 

determinants---has a root (by the fundamental theorem of algebra); 

this root is an eigenvalue of the matrix. What's wrong with that answer? 

It depends upon determinants, that's what. Determinants are difficult, 

non-intuitive, and often defined without motivation. As we'll see, 

there is a better proof---one that is simpler, clearer,  provides more insight, 

and avoids determinants.''}

\smallskip
\noindent
Axler then goes on to describe a {\it determinant-less} approach to linear algebra, that while very elegant, is {\bf not} to our liking. It is way too {\it abstract} for our taste.
We believe that one of the ills of undergraduate mathematics instruction is  excess abstraction. We love determinants because they are so {\bf concrete}. 

\smallskip
\noindent
In our humble opinion, determinants are {\bf easy}, {\bf intuitive}, and can be easily defined with great motivation. In fact, it is a straightforward extension of
the good-old factorial function $n!$. Recall that $n!$ {\bf counts} the number of permutations of the set $\{1, \dots, n\}$. In other words
\begin{align*}
n! = \sum_{\pi \in S_n} 1 \; .
\end{align*}
Now for any permutation $\pi=(\pi_1, \dots, \pi_n)$ define a {\it weight}
\begin{align*}
\operatorname{Weight}(\pi):&= \operatorname{sign}(\pi) \, a_{1,\pi_1} \cdots a_{n,\pi_n} \;,
\end{align*}
and then the determinant is {\it simply} the weight-enumerator of the symmetric group:
\begin{align*}
\det(a_{ij}) = \sum_{\pi \in S_n} \operatorname{Weight}(\pi) \; .
\end{align*}

\smallskip
\noindent
Note that our definition is {\bf combinatorial}, and indeed they come up so often in tough enumeration problems, where it is already a challenge
to express the quantity of interest as an {\bf explicit} (symbolic) determinant, and then an often bigger challenge is to {\bf evaluate} it
in {\bf closed form}. In this {\it case study} we will discuss various ways to tackle such determinants, by focusing on one `hard nut' that came up in MathOverflow.

\section{An Intriguing Determinant}

\noindent
The discussion in this  article, and irs rendition, is motivated by a question on MathOverflow \cite{Cigler} posed by Johann Cigler
wherein he asked for a simple direct proof of the determinant
$$\det A_{n,m}:=\det \left[ \binom{2m}{j-i+m}-\binom{2m}{m-i-j-1} \right]_{i,j=0}^{n-1}
=\prod_{i\leq j}^{1,m-1}\frac{2n+i+j}{i+j}.$$
The expression on the right-hand side of the identity is connected to a host of interesting combinatorial interpretations (see references \cite{Ham}, \cite{Jonsson}, \cite{Kratt}, \cite{Proctor} and \cite{Stanley}).

\smallskip
\noindent
Our journey began with a generalization (true to P\'{o}lya's dictum: {\em generalize to trivialize}). Introduce a new parameter $x$ and a matrix given by
\begin{align*}
T_{n,m}(x):&=\left[\binom{x+m}{j-i+m}-\binom{x+m}{m-i-j-1}\right]_{i,j=0}^{n-1}.
\end{align*}
After enough experimentation, we managed to guess a closed-form evaluation
for its determinant.

\noindent
\begin{conjecture} \label{conj1} We have
$$\det T_{n,m}(x)=\prod_{i=1}^n\prod_{j=1}^m\frac{(x+i-j)(x+2i+j-2)}{(x+2i-j)(i+j-1)}.$$
\end{conjecture}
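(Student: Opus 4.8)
The plan is to prove Conjecture~\ref{conj1} by induction on $n$ using Zeilberger's \emph{holonomic ansatz}, with the two resulting summation identities discharged by \emph{constant term} manipulations, and with \emph{Dodgson condensation} used to generate and cross-check the data that feeds the ansatz. Write $a_{i,j}=\binom{x+m}{j-i+m}-\binom{x+m}{m-i-j-1}$ for the $(i,j)$ entry and let $P_{n,m}(x)$ denote the conjectured right-hand side. Since the product telescopes in $n$, I first record the target ratio
\[
R_{n,m}(x):=\frac{P_{n,m}(x)}{P_{n-1,m}(x)}=\prod_{j=1}^m\frac{(x+n-j)(x+2n+j-2)}{(x+2n-j)(n+j-1)},
\]
so that the whole statement reduces to showing $\det T_{n,m}(x)=R_{n,m}(x)\,\det T_{n-1,m}(x)$ together with the base case $n=1$.

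For the holonomic ansatz I would introduce the normalized last-row cofactors $c_n(j)$ of $T_{n,m}(x)$, characterized by $c_n(n-1)=1$ and by the orthogonality relations $\sum_{j=0}^{n-1}a_{i,j}\,c_n(j)=0$ for $0\le i\le n-2$. If such a function exists, cofactor expansion along the last row gives $\det T_{n,m}(x)=\det T_{n-1,m}(x)\cdot\sum_{j=0}^{n-1}a_{n-1,j}\,c_n(j)$, which is exactly the recurrence above once one proves $\sum_{j=0}^{n-1}a_{n-1,j}\,c_n(j)=R_{n,m}(x)$. The creative, machine-assisted step is to \emph{guess} a closed form for $c_n(j)$ as a hypergeometric term in $j$ (rational in $n,m,x$); here Dodgson condensation is useful, since computing small cases of the corner and central minors supplies enough values of the cofactor ratios to fit a candidate, and the Desnanot--Jacobi relation furnishes an independent check on the guessed recurrence.

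The two identities that must then be \emph{proved}---namely $\sum_{j=0}^{n-1}a_{i,j}\,c_n(j)=0$ for $i\le n-2$ and $\sum_{j=0}^{n-1}a_{n-1,j}\,c_n(j)=R_{n,m}(x)$---are where the constant term method enters. Using $\binom{x+m}{k}=\operatorname{CT}_z z^{-k}(1+z)^{x+m}$, each sum becomes the constant term in $z$ of $(1+z)^{x+m}$ times a rational/geometric factor coming from $c_n(j)$; summing the finite geometric series in $j$ first and then extracting the constant term should collapse everything to the desired product. Equivalently, I would run Zeilberger's algorithm to produce a recurrence in $i$ annihilating the left-hand sum and verify that the conjectured right-hand side satisfies the same recurrence with matching initial values.

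I expect the main obstacle to be the verification of the vanishing relations $\sum_j a_{i,j}\,c_n(j)=0$: the difference-of-binomials structure of $a_{i,j}$ (a reflection $j\mapsto -j-1$) doubles the number of terms and breaks the naive telescoping, so the constant-term computation must be organized so that the two reflected pieces either combine or cancel cleanly. Guessing $c_n(j)$ in a form for which this cancellation is transparent is the delicate part; once the certificate is in hand, the base case and the final product evaluation should be routine.
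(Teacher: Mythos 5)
Your overall architecture coincides with the paper's holonomic-ansatz proof (Section 5): normalize the last-row cofactors by $c_{n,n}=1$, prove the orthogonality relations $\sum_j a_{i,j}c_{n,j}=0$ for $i<n$, and evaluate $t_n=\sum_j a_{n,j}c_{n,j}$ to get the telescoping ratio $\det T_{n,m}(x)/\det T_{n-1,m}(x)$; that reduction is sound, and the base case is trivial. The genuine gap is in the step you yourself call the delicate part: you plan to guess $c_{n,j}$ as a \emph{hypergeometric term} in $j$ and then dispatch the two sums either by summing a ``finite geometric series in $j$'' inside a constant term or by Zeilberger's algorithm. For general $x$ no such closed form appears to exist: the guessed annihilator of $c_{n,j}$ is a rank-$2$ holonomic system (a genuinely second-order recurrence in $j$), not a first-order one, so there is no product formula to feed into a constant-term collapse or into plain Zeilberger-style creative telescoping. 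An explicit hypergeometric formula, namely $c_{n,j}=(-1)^{n-j}\binom{2n-1}{n-j}\binom{n+m-j-1}{n-j}\big/\binom{2n+m-2}{n-j}$, is available only in the special case $x=m$ (Cigler's original determinant). In general one must work with the guessed recurrence system itself: prove $c_{n,n}=1$ by the integer-linear-substitution closure property, and certify the summation identities by holonomic (Chyzak-type) creative telescoping; the paper reports that these certificates are large and cost hours of CPU time, so ``routine'' is optimistic. Without producing either the closed form (which does not exist here) or the recurrence certificates, the plan does not yet constitute a proof.

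It is also worth noting that the paper's first, independent proof sidesteps the cofactor machinery entirely and uses your other two ingredients in a different configuration: Dodgson condensation fails on $T_{n,m}(x)$ directly but succeeds on a companion matrix $B_{n,m}(x)=\bigl[\binom{x+m+2i}{i-j+m}-\binom{x+m+2i}{i-j+m-1}\bigr]$ (and, after rewriting the second binomial via $\binom{A}{B}=\binom{A}{A-B}$, even on $T$ itself), while the constant-term method is applied not to the cofactor sums but to the \emph{whole determinant} via multilinearity and symmetrization, showing that $\det T_{n,m}(x)$ and $\det B_{n,m}(x)$ equal the same symmetrized constant term. If you want the constant-term idea to carry real weight, that is where it closes the argument cleanly.
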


\noindent
None of our initial attempts at proving this claim worked. 

\begin{remark} Meanwhile, we made the following curious observation. It requires introducing one notion. Let's denote by $CT_y\,F(y)$ the {\em constant term} of a Laurent polynomial $F(y)$. For example, if $F(y)=(3y^{-2}-2y^{-1}+5y+1)(y^3+6y+y^{-1})$ then to compute its constant term, expand fully as a Laurent series
$$F(y)=\frac3{y^3}-\frac2{y^2}+\frac{19}y-7+9y+28y^2+y^3+5y^4.$$
Therefore $CT_yF(y)=-7$.

\smallskip
\noindent
With this in mind, define the function
$$\varphi_n(y):=\frac{(1+y^n-y^{n+1}+y^{2n})(1-y^n)^2}{y^{n-1}(1+y)(1-y)^2}\in \mathbb{Z}[y,y^{-1}]$$
then
\begin{align*}
\det T_{n,n}(x)=n!^{-n}\prod_{i=0}^{n-1}\binom{n+i}i^{-1} \cdot \prod_{s=1-n}^{3n-2}(x+s)^{CT_y(y^{-s}\varphi_n(y))}.
\end{align*}

\end{remark}

\section{Enter Dodgson}

\noindent
A relatively simple choice for conjecturing and proving 
explicit determinant evaluations 
is  a method  inspired by {\em Dodgson's condensation}~\cite{Zeil1}, that goes back to Jacobi and Desnanot, and popularized by the Reverend Charles L. Dodgson (aka Lewis Carroll). This is the third author's favorite method (see Section 2.3 of~\cite{Kratt2}).
However, $\det T_{n,m}(x)$ did not yield to this at all. It was necessary to change the approach. Our first breakthrough was realized in the form of discovering a new matrix
\begin{align*}
D_n(a,b):&=\left[\binom{2i+2a}{i-j+a-b}-\binom{2i+2a}{i-j+a-b-1}\right]_{i,j=0}^{n-1}
\end{align*}
which, happily, is amenable to the present technique.

\begin{theorem} \label{dodg1} We have the determinantal evaluation
\begin{align*}
\det D_n(a,b)=\prod_{i=1}^n\prod_{j=1}^{a-b}\frac{(a+b+i-j)(a+b+2i+j-2)}{(a+b+2i-j)(i+j-1)}.
\end{align*}
\end{theorem}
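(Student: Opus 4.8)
The plan is to exploit the way $D_n(a,b)$ has been engineered so that \emph{all} of its corner minors again belong to the family $\{D_k\}$, and then to drive the evaluation by the Desnanot--Jacobi (Dodgson condensation) recurrence of \cite{Zeil1}.

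First I would locate the five relevant minors. Writing the $(i,j)$ entry as $\binom{2i+2a}{i-j+a-b}-\binom{2i+2a}{i-j+a-b-1}$ and reindexing the surviving rows and columns back so as to start at $0$, direct substitution gives: deleting the last row and last column returns $D_{n-1}(a,b)$; deleting the first row and first column returns $D_{n-1}(a+1,b+1)$; deleting the first row and last column returns $D_{n-1}(a+1,b)$; deleting the last row and first column returns $D_{n-1}(a,b+1)$; and deleting the first and last rows together with the first and last columns returns $D_{n-2}(a+1,b+1)$. This self-similarity under ``cornering'' is precisely the structural feature that makes $D_n$ more cooperative than $T_{n,m}$.

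Feeding these into the Desnanot--Jacobi identity then yields the bilinear recurrence
\[
\det D_n(a,b)\,\det D_{n-2}(a+1,b+1)=\det D_{n-1}(a,b)\,\det D_{n-1}(a+1,b+1)-\det D_{n-1}(a+1,b)\,\det D_{n-1}(a,b+1).
\]
Let $P_n(a,b)$ denote the conjectured product on the right-hand side of the theorem. I would rewrite each inner product $\prod_{j=1}^{a-b}(\,\cdots\,)$ as a ratio of Gamma functions, so that the integer upper limit $a-b$ ceases to obstruct symbolic manipulation, and then verify that $P_n$ satisfies the very same bilinear recurrence as an identity of rational functions in $a$ and $b$. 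With the base cases $n=0$ (both sides equal $1$) and $n=1$ (where the claim collapses to the ballot-type identity $\binom{2a}{a-b}-\binom{2a}{a-b-1}=P_1(a,b)$) in hand, a strong induction on $n$ finishes the proof: assuming the formula for all sizes below $n$, substitute into the recurrence and cancel the nonvanishing factor $\det D_{n-2}(a+1,b+1)=P_{n-2}(a+1,b+1)$ to obtain $\det D_n(a,b)=P_n(a,b)$ for generic parameters, hence identically, since both sides are polynomials in $a$.

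The main obstacle I anticipate is the recurrence-verification step for $P_n$. Although it is ``merely'' a rational-function identity once everything is expressed through Gamma functions, matching the four products factor by factor across the simultaneous shift $a\mapsto a+1$, $b\mapsto b+1$---which also slides the product length $a-b$ up by one in one term and down by one in another---is delicate bookkeeping, and this is exactly the spot where a computer-algebra verification, in keeping with the spirit of this paper, pays off. A secondary point demanding care is the legitimacy of the cancellation in the inductive step, which relies on $P_{n-2}$ being a nonzero rational function so that division is valid for generic $a$ before polynomiality is invoked.
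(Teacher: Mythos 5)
Your proposal is correct and follows essentially the same route as the paper: identify the five corner minors of $D_n(a,b)$ as members of the same family with shifted parameters, apply Desnanot--Jacobi, check that the conjectured product satisfies the identical bilinear recurrence, and close the induction with small base cases (the paper uses $n=1,2$ where you use $n=0,1$). Your explicit verification of the minor structure and your remark about the nonvanishing of the denominator in the division step merely spell out what the paper leaves implicit.
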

\begin{proof} Denoting the determinant by $M_n(a,b)$, Dodgson's condensation states that
$$M_n(a,b)=\frac{M_{n-1}(a,b)M_{n-1}(a+1,b+1)-M_{n-1}(a+1,b)M_{n-1}(a,b+1)}{M_{n-2}(a+1,b+1)}.$$
Therefore, we only need to verify that the right-hand side does obey the same recurrence. Then, check both sides agree, say, when $n=1$ and $n=2$.
\end{proof}

\noindent
For reasons that will become clearer soon, let's reformulate the above result.

\begin{corollary} \label{after-dodg1} We have
\begin{align*}
\det B_{n,m}(x):&=\det \left[\binom{x+m+2i}{i-j+m}-\binom{x+m+2i}{i-j+m-1}\right]_{i,j=0}^{n-1} \\
&=\prod_{i=1}^n\prod_{j=1}^m\frac{(x+i-j)(x+2i+j-2)}{(x+2i-j)(i+j-1)}.
\end{align*}
\end{corollary}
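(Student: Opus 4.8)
The plan is to obtain the Corollary from Theorem~\ref{dodg1} by nothing more than a change of parameters, after recognizing $B_{n,m}(x)$ as $D_n(a,b)$ for a suitable choice of $a$ and $b$. First I would match the two families of matrix entries. The $(i,j)$ entry of $D_n(a,b)$ is $\binom{2i+2a}{i-j+a-b}-\binom{2i+2a}{i-j+a-b-1}$, while that of $B_{n,m}(x)$ is $\binom{x+m+2i}{i-j+m}-\binom{x+m+2i}{i-j+m-1}$. Setting $2a=x+m$ and $a-b=m$, equivalently $a=\tfrac{x+m}{2}$ and $b=\tfrac{x-m}{2}$ (so that $a+b=x$ and $a-b=m$), the upper index $2i+2a$ becomes $x+m+2i$ and the lower indices $i-j+a-b$ and $i-j+a-b-1$ become $i-j+m$ and $i-j+m-1$. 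Hence, entry by entry, $B_{n,m}(x)=D_n\!\left(\tfrac{x+m}{2},\tfrac{x-m}{2}\right)$.

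Next I would carry the right-hand side of Theorem~\ref{dodg1} through the same substitution. Under $a+b=x$ and $a-b=m$ the product
\[
\prod_{i=1}^n\prod_{j=1}^{a-b}\frac{(a+b+i-j)(a+b+2i+j-2)}{(a+b+2i-j)(i+j-1)}
\]
turns into
\[
\prod_{i=1}^n\prod_{j=1}^{m}\frac{(x+i-j)(x+2i+j-2)}{(x+2i-j)(i+j-1)},
\]
which is exactly the evaluation claimed for $\det B_{n,m}(x)$. Combining this with the entrywise identification and with Theorem~\ref{dodg1} completes the argument.

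The only point that needs care is the legitimacy of substituting the half-integral values $a=\tfrac{x+m}{2}$, $b=\tfrac{x-m}{2}$. Since $m=a-b$ is the quantity that fixes the lower binomial indices, I would regard $m$ as a fixed non-negative integer and read Theorem~\ref{dodg1} as an identity of polynomials in the single variable $a+b$, with the binomial coefficients interpreted as polynomials in their upper arguments. Both sides are then polynomials in $a+b$, so the identity persists when we specialize $a+b=x$; no integrality of $a$ or $b$ individually is required. Thus there is no genuine obstacle here beyond the bookkeeping of verifying the entrywise and factorwise correspondences above, which is precisely why this reformulation is recorded as a corollary rather than proved afresh.
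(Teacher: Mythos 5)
Your proposal is correct and follows exactly the paper's own proof, which likewise deduces the Corollary from Theorem~\ref{dodg1} via the substitution $a=\tfrac{x+m}{2}$, $b=\tfrac{x-m}{2}$. Your additional remark justifying the half-integral substitution by treating both sides as polynomials in $a+b$ (for fixed integer $m=a-b$) is a sensible extra precaution that the paper leaves implicit.
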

\begin{proof} This follows from Theorem \ref{dodg1}  with $a=\frac{x+m}2$ and $b=\frac{x-m}2$ in $D_n(a,b)$.
\end{proof}

\noindent
Note that the {\it right hand sides} of the (already proved!) Corollary \ref{after-dodg1} and the (still unproved) Conjecture \ref{conj1} are the same, but why is it useful?
We will see in the next section that the {\bf left hand sides} are the same, and, hence Conjecture 1 would follow from the {\it transitivity} of the $=$ relation:
$A=B$ and $B=C$ $\Rightarrow$ $A=C$.

\smallskip
\noindent
This method was famously used by one of us (DZ) to prove the Mills-Robbins-Rumsey {\it alternating sign matrix conjecture} \cite{Zeil5} (see also \cite{Bressoud}).
In that article the author first found a (complicated) constant term expression for the desired quantity (the number of alternating sign matrices) and {\it another} (almost as complicated) constant term expression for
another quantity ({\it totally symmetric self-complementary plane partitions}), already proved by guru George Andrews to be equal to the desired expression.
He then worked very hard to prove that these two constant term expressions are the same, and finally,
by taking a {\em free ride}  from Andrews' previously proved enumeration, and using the above `{\it transitivity} of $=$', gave the {\bf first} proof of this notorious conjecture.

\smallskip
\noindent
In the next section we will use this methodology, but things will be much simpler, since we will prove that the left sides of Conjecture \ref{conj1} and Corollary \ref{after-dodg1} are given by the
{\it same} constant-term expression, so unlike \cite{Zeil5}, we will not have to work hard to prove that $A=B$, since $A$ is {\bf exactly} $B$.

\smallskip
\section{Enter Constant Term Evaluations}

\noindent
As promised at the end of the last section, in this section, we describe the connection between the two matrices $B_{n,m}(x)$ and $T_{n,m}(x)$ from the preceding sections.

\begin{theorem} \label{CT1} It holds true that $\det T_{n,m}(x)=\det B_{n,m}(x)$, and hence so does Conjecture \ref{conj1}.
\end{theorem}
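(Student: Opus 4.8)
The plan is to turn each determinant into a single multivariate constant term, evaluate one auxiliary determinant, and then connect the two sides through a well-chosen change of variables. Writing $\binom{N}{k}=CT_z\big[z^{-k}(1+z)^N\big]$ for the constant-term (coefficient) operator, the $(i,j)$ entry of $T_{n,m}(x)$ becomes
\[
  T_{ij}=CT_z\big[z^{\,i-m}(z^{-j}-z^{j+1})(1+z)^{x+m}\big],
\]
and, similarly, the entry of $B_{n,m}(x)$ becomes $CT_z\big[z^{\,j-i-m}(1-z)(1+z)^{x+m+2i}\big]$. Expanding $\det T_{n,m}(x)$ by the Leibniz (permutation) formula and assigning a private variable $z_i$ to the $i$-th row, I would pull out the row factors and collapse the permutation sum into an auxiliary determinant:
\[
  \det T_{n,m}(x)=CT_{\mathbf z}\Big[\prod_{i=0}^{n-1}z_i^{\,i-m}(1+z_i)^{x+m}\cdot\det\big[z_i^{-j}-z_i^{j+1}\big]_{i,j=0}^{n-1}\Big],
\]
where $\mathbf z=(z_0,\dots,z_{n-1})$ and each $(1+z_i)^{x+m}$ is expanded as a power series about $z_i=0$, so every constant term is a well-defined polynomial in $x$.

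Next I would evaluate the antisymmetric auxiliary determinant. Since its $i$-th row vanishes at $z_i=1$, two rows coincide when $z_i=z_k$, and rows $i,k$ become proportional when $z_iz_k=1$, a Weyl-denominator (symplectic-type) divisibility argument together with a degree and leading-term count yields
\[
  \det\big[z_i^{-j}-z_i^{j+1}\big]_{i,j=0}^{n-1}=(-1)^{\binom n2}\prod_i z_i^{-(n-1)}(1-z_i)\prod_{i<k}(z_k-z_i)(1-z_iz_k).
\]
The crucial step is then the substitution $u_i:=(1+z_i)^2/z_i=z_i+2+z_i^{-1}$, which obeys the factorization $u_k-u_i=-(z_k-z_i)(1-z_iz_k)(z_iz_k)^{-1}$. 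Taking the product over $i<k$ turns the symplectic product $\prod_{i<k}(z_k-z_i)(1-z_iz_k)$ into the ordinary Vandermonde $\prod_{i<k}(u_k-u_i)$ at the cost of exactly $(-1)^{\binom n2}\prod_i z_i^{\,n-1}$; this cancels both the sign and the $z_i^{-(n-1)}$ from the previous display, leaving the clean common expression
\[
  \det T_{n,m}(x)=CT_{\mathbf z}\Big[\prod_i z_i^{\,i-m}(1-z_i)(1+z_i)^{x+m}\cdot\prod_{i<k}(u_k-u_i)\Big].
\]

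Finally I would run the machine in reverse on the $u$-Vandermonde. Writing $\prod_{i<k}(u_k-u_i)=\sum_\sigma\operatorname{sign}(\sigma)\prod_i u_i^{\sigma(i)}$ and recalling $u_i^{\sigma(i)}=z_i^{-\sigma(i)}(1+z_i)^{2\sigma(i)}$, the integrand separates across the variables, so the constant term factors through the permutation sum and reassembles into a determinant:
\[
  \det T_{n,m}(x)=\det\Big[CT_z\big(z^{\,i-m-j}(1-z)(1+z)^{x+m+2j}\big)\Big]_{i,j=0}^{n-1}=\det\Big[\binom{x+m+2j}{m+j-i}-\binom{x+m+2j}{m+j-i-1}\Big]_{i,j=0}^{n-1}.
\]
This last matrix is precisely the transpose of $B_{n,m}(x)$, so $\det T_{n,m}(x)=\det B_{n,m}(x)$; together with Corollary \ref{after-dodg1} and the transitivity of $=$, Conjecture \ref{conj1} follows. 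I expect the main obstacle to be the middle step: spotting the change of variables $u=(1+z)^2/z$ and verifying that its factorization $u_k-u_i=-(z_k-z_i)(1-z_iz_k)(z_iz_k)^{-1}$ matches the symplectic product exactly, so that every sign $(-1)^{\binom n2}$ and every monomial prefactor $z_i^{\pm(n-1)}$ cancels. The auxiliary Weyl-denominator evaluation, and keeping track of which power-series expansion keeps each constant term meaningful, are the remaining technical points.
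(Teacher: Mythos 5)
Your proposal is correct, and while it uses the same two basic ingredients as the paper (a constant-term representation of each entry with a private variable per row, and the odd-symplectic Weyl-denominator evaluation of $\det\big[z_i^{-j}-z_i^{j+1}\big]$, which is exactly Lemma~2 of Krattenthaler's \emph{Advanced Determinant Calculus} invoked in the text), the middle of your argument is organized genuinely differently. The paper treats $\det T_{n,m}(x)$ and $\det B_{n,m}(x)$ separately, symmetrizing each over $\mathfrak{S}_n$ (the Stanton--Stembridge trick, with the attendant $\tfrac1{n!}$) until both land on one common symmetric constant term; you instead never touch $B_{n,m}(x)$'s own constant-term representation at all, but push $\det T_{n,m}(x)$ forward in one direction: the substitution $u=(1+z)^2/z$ with $u_k-u_i=-(z_k-z_i)(1-z_iz_k)(z_iz_k)^{-1}$ converts the symplectic product into a Vandermonde in the $u_i$ (I checked that your sign $(-1)^{\binom n2}$ and the monomial $\prod_i z_i^{\,n-1}$ cancel exactly as you claim), and re-collapsing that Vandermonde through the Leibniz sum reassembles the constant term into $\det B_{n,m}(x)^{T}$. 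The same algebraic identity is hidden in the paper's computation of $\det B_{n,m}(x)$ (where $(2+t_i+t_i^{-1})^i$ appears), so the toolbox is identical, but your route avoids symmetrization entirely and is arguably cleaner and more mechanical. The trade-off is that the paper's symmetrized form is itself a deliverable --- it yields the ``new constant term evaluation'' stated as a corollary right after the theorem --- which your one-directional transformation does not produce. The only point you should still write out carefully is the auxiliary determinant evaluation: since the entries are Laurent polynomials, clear denominators by $\prod_i z_i^{\,n-1}$ before running the divisibility-plus-degree argument, and pin down the overall sign (your $n=1,2$ normalization does determine it).
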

\begin{proof} Denote the {\em constant term} of the Laurent series expansion of $F(t)$ by $CT_{t}F$. In view of this, the entries of $T_{n,m}(x)$ can be expressed as a constant term:
$$\binom{x+m}{j-i+m}-\binom{x+m}{m-i-j-1} =
  CT_t  \left(1+\frac1t\right)^{x+m} t^m(t^{j-i} -t^{-i-j-1}).$$
The entries in the $i^{\text{th}}$ row of the matrix $T_{n,m}(x)$ are (we need a different variable for each row, let's call it $t_i$ - this does not change the values)
$$CT_{t_i} \left(1+\frac1{t_i}\right)^{x+m} t_i^m(t_i^{j-i} -t_i^{-i-j-1}).$$
Let $\vec{t}=(t_0,t_1,\dots,t_{n-1})$. We apply multi-linearity of determinants to obtain
\begin{align*}
\det T_{n,m}(x)&=\det\left[CT_{t_i} \left(1+\frac1{t_i}\right)^{x+m} t_i^{m-i}\left(t_i^{j} -t_i^{-j-1}\right)\right] \\
&= CT_{\vec{t}} \left[\prod_{i=0}^{n-1} \left(1+t_i\right)^{x+m}\cdot t_i^{-x-i-\frac12} \cdot \det \left(t_i^{j+\frac12} -t_i^{-j-\frac12}\right)\right] \\
&=CT_{\vec{t}}\left[\prod_{i=0}^{n-1}\frac{(1+t_i)^{x+m}(t_i-1)}{t_i^{x+n+i}}\cdot \prod_{i<j}^{0,n-1}(t_i-t_j)(1-t_it_j)\right];
\end{align*}
where we have utilized \cite[Lemma~2]{Kratt2}.
\noindent
Let us {\em symmetrize} the right-hand side (by averaging over the images of the symmetric group $\mathfrak{S}_n$, using the fact that the functional {\em CT} is unaffected by permuting the variables,
since {\bf a constant is always a constant}; this was called the {\em Stanton-Stembridge trick} in \cite{Zeil5}).
Note that only the product of two factors 
$\prod_{i=0}^{n-1}t_i^{-i}\cdot\prod_{i<j}(t_i-t_j)$ will be impacted to introduce a new Vandermonde:
\begin{align*}
&\det T_{n,m}(x) \\
&=\frac1{n!}\, 
CT_{\vec{t}} \left\{\prod_{i=0}^{n-1}\left[ \frac{(1+t_i)^{x+m}(t_i-1)}{t_i^{x+n}}\right] \cdot
\prod_{i<j}^{0,n-1}(t_i-t_j)(t_j^{-1}-t_i^{-1})(1-t_it_j)\right\} \\
&=\frac1{n!}\,
CT_{\vec{t}} \left\{\prod_{i=0}^{n-1}\left[ \frac{(1+t_i)^{x+m}(t_i-1)}{t_i^{x+2n-1}}\right] \cdot
\prod_{i<j}^{0,n-1}(t_i-t_j)^2(1-t_it_j)\right\}.
\end{align*}

\smallskip
\noindent
Now, repeat the constant term extraction on the determinant from the previous section. The outcome of this procedure is:

\begin{align*}
&\det B_{n,m}(x) \\
&=\det\left[CT_{t_i} \left(1+\frac1{t_i}\right)^{x+m+2i} t_i^m(t_i^{i-j} -t_i^{i-j-1})\right] \\
&=CT_{\vec{t}} \left\{\prod_{i=0}^{n-1}\left[\left(1+\frac1{t_i}\right)^{x+m+2i}t_i^{m+i-1}(t_i-1)\right]\prod_{i<j}^{0,n-1}(t_j^{-1}-t_i^{-1})\right\} \\
&=CT_{\vec{t}} \left\{\prod_{i=0}^{n-1}\left[\frac{(1+t_i)^{x+m}(t_i-1)}{t_i^{x+n}}\right](2+t_i+t_i^{-1})^i\prod_{i<j}^{0,n-1}(t_i-t_j)\right\} \\
&=\frac1{n!}\,CT_{\vec{t}} \left\{\prod_{i=0}^{n-1}\left[\frac{(1+t_i)^{x+m}(t_i-1)}{t_i^{x+n}}\right] \prod_{i<j}^{0,n-1}(t_i-t_j)\left(t_j+t_j^{-1}-t_i-t_i^{-1}\right)\right\} \\
&=\frac1{n!}\,CT_{\vec{t}} \left\{\prod_{i=0}^{n-1}\left[\frac{(1+t_i)^{x+m}(t_i-1)}{t_i^{x+2n-1}}\right] \prod_{i<j}^{0,n-1}(t_i-t_j)^2(1-t_it_j)\right\}.
\end{align*}

\noindent
Based on our analysis shown above, both determinants $\det T_{n,m}(x)$ and $\det B_{n,m}(x)$ are equal to the same constant term
$$\frac1{n!}\,CT_{\vec{t}} \left\{\prod_{i=0}^{n-1}\left[(1+t_i)^{x+m}(t_i-1)t_i^{-x-2n+1}\right] \prod_{i<j}^{0,n-1}(t_i-t_j)^2(1-t_it_j)\right\}.$$
Hence $\det T_{n,m}(x)$ and $\det B_{n,m}(x)$ must be equal to each other, as claimed.
\end{proof}

\noindent
In particular, we thus recover the determinant of Cigler as stated in Section 2:
\begin{corollary} {\rm(}The case $x=m${\rm)} We have
\begin{align*}
\det\left[\binom{2m}{j-i+m}-\binom{2m}{m-i-j-1}\right]_{i,j=0}^{n-1}=\prod_{1\leq i\leq j\leq m-1}\frac{2n+i+j}{i+j}.
\end{align*}
\end{corollary}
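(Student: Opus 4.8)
The plan is to read the corollary off as the specialization $x=m$ of the evaluation $\det T_{n,m}(x)=\prod_{i=1}^n\prod_{j=1}^m\frac{(x+i-j)(x+2i+j-2)}{(x+2i-j)(i+j-1)}$, which is now established by Theorem~\ref{CT1} together with Corollary~\ref{after-dodg1}. Indeed, at $x=m$ the binomial $\binom{x+m}{\,\cdot\,}$ becomes $\binom{2m}{\,\cdot\,}$, so the left-hand side of the corollary is precisely $\det T_{n,m}(m)$. Everything therefore reduces to transforming the closed form at $x=m$,
$$\prod_{i=1}^n\prod_{j=1}^m\frac{(m+i-j)(m+2i+j-2)}{(m+2i-j)(i+j-1)},$$
into Cigler's triangular product $\prod_{1\le i\le j\le m-1}\frac{2n+i+j}{i+j}$.

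The first observation is that two of the four factors cancel. For each fixed $i$, the integers $m+i-j$ with $1\le j\le m$ run through $\{i,i+1,\dots,i+m-1\}$, and so do the integers $i+j-1$ with $1\le j\le m$; hence $\prod_{j=1}^m(m+i-j)=\prod_{j=1}^m(i+j-1)$ and these contributions drop out. (This coincidence is special to $x=m$, and is exactly what makes the specialization clean.) The surviving factors telescope in $j$ into quotients of factorials, leaving
$$\det T_{n,m}(m)=\prod_{i=1}^n\prod_{j=1}^m\frac{2i+m+j-2}{2i+j-1}=\prod_{i=1}^n\frac{(2m+2i-2)!\,(2i-1)!}{(m+2i-2)!\,(m+2i-1)!}.$$

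To match this against $\prod_{1\le i\le j\le m-1}\frac{2n+i+j}{i+j}$ I would induct on $n$, since the two products are indexed over different ranges ($1\le i\le n$ versus $1\le i\le j\le m-1$) and so cannot be compared slot by slot. For the left-hand product the ratio of consecutive values is just the $i=n$ factor, $\prod_{j=1}^m\frac{2n+m+j-2}{2n+j-1}=\frac{(2n+2m-2)!\,(2n-1)!}{(2n+m-2)!\,(2n+m-1)!}$. For Cigler's product only the numerators depend on $n$; writing each inner slice as $\prod_{j=i}^{m-1}(2n+i+j)=\frac{(2n+i+m-1)!}{(2n+2i-1)!}$ and forming the ratio of its values at $n$ and $n-1$, one checks that the product over $i$ again telescopes to the very same quantity $\frac{(2n+2m-2)!\,(2n-1)!}{(2n+m-2)!\,(2n+m-1)!}$. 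Both sides equal $1$ at $n=0$ (every factor of the triangular product is then $i+j$ over itself), so the induction closes and transitivity of equality finishes the corollary.

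The main obstacle is purely bookkeeping: verifying that the $n$-dependence of the triangular product telescopes to the same factorial ratio as the left-hand product. This needs no new idea beyond collapsing consecutive-integer products into quotients of factorials, but it does require carefully interleaving the odd and even offsets $2n+2i-1$ and $2n+2i-2$ so that their combined product over $i$ collapses to $\frac{(2n+2m-3)!}{(2n-1)!}$. Once this single telescoping identity is checked, the proof is complete.
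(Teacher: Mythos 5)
Your proposal is correct and follows the paper's proof exactly: the paper also obtains the corollary by setting $x=m$ in the established evaluation of $\det T_{n,m}(x)$ and then simplifying the resulting product, merely omitting the bookkeeping (the cancellation $\prod_{j=1}^m(m+i-j)=\prod_{j=1}^m(i+j-1)$, the telescoping into factorial ratios, and the induction on $n$) that you carry out explicitly and correctly.
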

\begin{proof} The specialization $x=m$ in the formula for $\det T_{n,m}(x)$ results in
$$\det T_{n,m}(m)=\prod_{1\leq i\leq j\leq m-1}\frac{2n+i+j}{i+j}$$
after some simplification, which is $\det A_{n,m}$.
\end{proof}
\noindent
In addition, we believe to have uncovered a {\em new} constant term evaluation which appears not to fit any of the existing formulas in the literature.
\begin{corollary} We have
\begin{align*}
&CT_{\vec{t}} \left\{\prod_{i=0}^{n-1}\left[\frac{(1+t_i)^{x+m}(t_i-1)}{t_i^{x+2n-1}}\right] \prod_{i<j}^{0,n-1}(t_i-t_j)^2(1-t_it_j)\right\}  \\
&=n!\cdot \prod_{i=1}^n\prod_{j=1}^m\frac{(x+i-j)(x+2i+j-2)}{(x+2i-j)(i+j-1)}.
\end{align*}
\end{corollary}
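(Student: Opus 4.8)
The plan is to recognize that this constant term has, in effect, already been computed in the course of proving Theorem~\ref{CT1}, so that essentially no fresh calculation is required. During that proof the determinant $\det T_{n,m}(x)$ was transformed---via the constant-term encoding of the binomial coefficients, the Vandermonde-type evaluation in \cite[Lemma~2]{Kratt2}, and the Stanton--Stembridge symmetrization trick---into precisely
$$\det T_{n,m}(x)=\frac1{n!}\,CT_{\vec{t}} \left\{\prod_{i=0}^{n-1}\left[\frac{(1+t_i)^{x+m}(t_i-1)}{t_i^{x+2n-1}}\right] \prod_{i<j}^{0,n-1}(t_i-t_j)^2(1-t_it_j)\right\}.$$
The first step is therefore simply to solve this relation for the constant term, which gives $CT_{\vec{t}}\{\cdots\}=n!\cdot\det T_{n,m}(x)$.

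The second step is to substitute the closed form for $\det T_{n,m}(x)$. By Theorem~\ref{CT1} we have $\det T_{n,m}(x)=\det B_{n,m}(x)$, and Corollary~\ref{after-dodg1} evaluates the latter as $\prod_{i=1}^n\prod_{j=1}^m\frac{(x+i-j)(x+2i+j-2)}{(x+2i-j)(i+j-1)}$. Feeding this product into the previous display yields exactly the asserted identity, and the proof concludes by transitivity of $=$.

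The honest remark is that all of the genuine difficulty sits upstream. The real work was the symmetrization and the reduction to a product of a squared Vandermonde and the companion factor $\prod_{i<j}(t_i-t_j)^2(1-t_it_j)$, together with the \emph{independent} determinant evaluation supplied by Dodgson condensation in Theorem~\ref{dodg1}. What makes the present corollary essentially free is that we never evaluate the constant term directly; instead we read off its value from a quantity that has already been pinned down by other means. If one instead insisted on a \textbf{self-contained} proof---extracting this constant term in closed form without routing through $\det T_{n,m}(x)$---the main obstacle would be carrying out the multivariate residue computation attached to the factor $\prod_{i<j}(t_i-t_j)^2(1-t_it_j)$, which is exactly the step that the transitivity argument lets us bypass.
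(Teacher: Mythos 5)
Your proposal is correct and is essentially identical to the paper's own (one-line) proof: the paper likewise cites the proof of Theorem~\ref{CT1}, where the constant term was shown to equal $n!\cdot\det T_{n,m}(x)=n!\cdot\det B_{n,m}(x)$, together with the product evaluation from Corollary~\ref{after-dodg1}. Nothing further is needed.
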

\begin{proof} This is a consequence of Corollary \ref{after-dodg1} and the proof of Theorem \ref{CT1}.
\end{proof}

\section{Enter the Holonomic Ansatz}

\noindent
In this section, we use an automated method called the {\em holonomic ansatz}
due to the third author~\cite{Zeil3}, and extended by the second author and
collaborators~\cite{Du,Koutschan1,Koutschan3}
to discover and prove many new deep determinant identities,
which will provide yet another proof of Conjecture~1.
\smallskip
\noindent
Note: we also reindex $1\leq i,j \leq n$ instead of $0\leq i,j \leq n-1$. Let us recall the matrix $A_n:=A_n(m,x):=(a_{i,j})_{1\leq i,j\leq n}$
whose entries $a_{i,j}$ are given as follows:
\[
  a_{i,j} := a_{i,j}(m,x) := \binom{m+x}{m-i+j}-\binom{m+x}{m-i-j+1}.
\]
Denote by $A_n^{(i,j)}$ the matrix $A_n$ with the $i^{\text{th}}$-row and the $j^{\text{th}}$-column being removed. Then we define
\begin{equation}\label{eq:def_cnj}
  c_{n,j} := c_{n,j}(m,x) := (-1)^{n+j}\,\frac{\det A_n^{(n,j)}}{\det A_{n-1}}.
\end{equation}
Employing an ansatz with undetermined coefficients (in other words, guessing), we
find plausible recurrence relations that are conjecturally satisfied
by the sequence $c_{n,j}$, which suggest that it is holonomic with rank~$2$:
\begin{align*}
  & j (j-n-1) (m+n) (2 n+x-1) (2 n+x) c_{n+1,j} +\\
  & (2 j-1) n (j-m) (2 n+x) (j-n-x+1) c_{n,j+1}+ \\
  & n (j+n+x-1) (4j^2n-(m^2 + 4n^2 - m)j \\
& \qquad -2mn+(2j^2 - 4jn - m)x - jx^2) c_{n,j} = 0, \qquad \text{and} \\
  & j (j-m+1) (j+n+1) (j-n-x+2) c_{n,j+2} \\
  & +(2 j^4+4 j^3-j^2 m^2-2 j^2 n^2-2 j^2 n x+2 j^2 n-j^2 x^2+2 j^2 x+2 j^2-j m^2 \\
  & \quad -2 j n^2-2 j n x+2 j n-j x^2+2 j x-m n^2-m n x+m n) c_{n,j+1} \\
  & +(j+1) (j+m) (j-n) (j+n+x-1) c_{n,j} = 0.
\end{align*}

\smallskip
\noindent
For the purpose of guessing, one has to evaluate $c_{n,j}$ for concrete
values of $n$ and~$j$; in this particular example it suffices to consider
$1\leq j\leq n\leq 15$. For a concrete integer~$n$, the vector
$(c_{n,j})_{1\leq j\leq n}$ can be determined by computing the
kernel of the matrix $(a_{i,j})_{1\leq i<n,1\leq j\leq n}$. Note
that this kernel computation appears to be challenging, because the matrix
entries are not rational functions in the parameters $m$ and~$x$. To overcome
this problem, and also for efficiency reasons, we employ the
evaluation-interpolation technique: when $m$ and~$x$ are substituted by
concrete integers, then the matrix entries turn into integers, and the
recurrences for~$c_{n,j}$ can be easily guessed. Performing the same
computation for different choices of $m$ and~$x$ allows one to reconstruct
recurrences with symbolic $m$ and~$x$, as given above.

\smallskip
\noindent
The shape of the above recurrences, i.e., their support and their leading
coefficients, implies that the following three initial conditions are
sufficient to define a unique bivariate sequence: $c_{1,1}=1$, $c_{1,2}=0$,
$c_{2,2}=1$. From now on, let $c_{n,j}$ denote this unique bivariate sequence
that is defined by unrolling the recurrences, starting from the given initial
conditions. We want to prove that this conjectured definition of $c_{n,j}$
agrees with its original definition~\eqref{eq:def_cnj}. For this purpose
it suffices to establish the two identities:
\begin{align}
  c_{n,n} &= 1 \qquad (n\geq1), \label{eq:id1} \\
  \sum_{j=1}^n a_{i,j} c_{n,j} &= 0 \qquad (1\leq i<n), \label{eq:id2}
\end{align}
which can be achieved by holonomic closure properties and creative
telescoping, respectively. We use the Mathematica package
HolonomicFunctions~\cite{Koutschan2}. Observe that the closure property ``integer-linear
substitution'' implies that $c_{n,n}$ satisfies a univariate recurrence of
order at most~$2$. We compute this recurrence and show that it admits a
constant solution. Then the two initial values $c_{1,1}=c_{2,2}=1$ imply
that $c_{n,n}=1$ for all $n\geq1$.

\smallskip
\noindent
In order to prove \eqref{eq:id2}, we split the sum into two parts:
\[
  s_{i,n} := \sum_{j=1}^n a_{i,j} c_{n,j} =
  \sum_{j=1}^n \binom{m+x}{m-i+j} c_{n,j} -
  \sum_{j=1}^n \binom{m+x}{m-i-j+1} c_{n,j}.
\]
Now creative telescoping can be employed to derive a set of recurrences for
each of the sums on the right-hand side, which can be combined to a set of
recurrences that is satisfied by the whole expression~$s_{i,n}$.  These
computations are far from trivial (it took about two hours of CPU time) and
the recurrences are too big to be displayed here. It can easily be checked
that $s_{i,n}=0$ for a few concrete small integers $i$ and~$n$, and the
computed recurrences then imply that $s_{i,n}=0$ for all $1\leq i<n$.

\smallskip
\noindent
Finally, we investigate the sum
\[
  t_n := t_n(m,x) := \sum_{j=1}^n a_{n,j} c_{n,j} =
  \sum_{j=1}^n \binom{m+x}{m-n+j} c_{n,j} -
  \sum_{j=1}^n \binom{m+x}{m-n-j+1} c_{n,j}.
\]

\smallskip
\noindent
Another two applications of creative telescoping exhibit that both sums on the
right-hand side satisfy the same recurrence, which therefore is also valid for
their difference $t_n$:
\begin{align*}
  & (m+n) (2 n+x-1) (2 n+x+1) (2 n+x)^2 (m-n-x) t_{n+1} \\
  & +n (n+x) (m-2 n-x-1) (m-2 n-x) (m+2 n+x-1) (m+2 n+x) t_n = 0.
\end{align*}
Since this is a first-order recurrence, a closed form for $t_n$ is
immediately obtained in terms of {\em Euler's gamma function}:
\[
  t_n = \frac{\Gamma(n)\,\Gamma(n+x)\,\Gamma(2n+x-m)\,\Gamma (2n+x+m-1)}%
  {\Gamma(n+m)\,\Gamma(n+x-m)\,\Gamma(2n+x)\,\Gamma(2n+x-1)}.
\]
From the definition \eqref{eq:def_cnj} of $c_{n,j}$ it follows that
\[
  t_n = \frac{\det A_n}{\det A_{n-1}},
\]
which yields the desired closed form of our determinant:
\[
  \det A_n = \prod_{i=1}^n t_i =
  \prod_{i=1}^n \prod_{j=1}^m \frac{t_i(j,x)}{t_i(j-1,x)} =
  \prod_{i=1}^n \prod_{j=1}^m \frac{(x+i-j)(x+2i+j-2)}{(x+2i-j)(i+j-1)}. \qquad \square
\]

\noindent
{\bf The special case $x=m$:} we point out something very pleasant happening here. The entries $a_{i,j}$ of the matrix $T_{n,m}(m)$ are given by
$$a_{i,j}=\binom{2m}{m-i+j}-\binom{2m}{m-i-j+1},$$
and the corresponding determinant by
$$b_n:=\det T_{n,m}(m)=\prod_{1\leq i\leq j\leq m-1}\frac{2n+i+j}{i+j}.$$
The key difference: this time, we are able to construct an {\em explicit} doubly-indexed sequence $\mathbf{c}$ according to $c_{n,n}=1$ and for $1\leq j<n$,
$$c_{n,j}=\frac{(-1)^{n-j}\binom{2n-1}{n-j}\binom{n+m-j-1}{n-j}}{\binom{2n+m-2}{n-j}}.$$
Then, Zeilberger's holonomic ansatz rules supreme once the following system of equations are proven to hold true:
\begin{align*}
\begin{cases}
\qquad \qquad \,\,\, \,\, c_{n,n}=1 \qquad \qquad n\geq1 \\
\sum_{j=1}^na_{i,j}\,c_{n,j}\,=0 \qquad \qquad 1\leq i\leq n-1 \\
\sum_{j=1}^na_{n,j}\,c_{n,j}=\frac{b_n}{b_{n-1}} \qquad \,\, n\geq1.
\end{cases}
\end{align*}
But, these are directly justified with the help of {\em Zeilberger's algorithm} \cite{Zeil4}. We will not pursue this matter because we have already proven Conjecture 1 (see above), in its 
full generality.

\section{$T_{n,m}(x)$ Finds its Twin}

\noindent
Suppressing the parameters $a$ and $b$, and defining $u_k:=\binom{a+b}{a-k}$, the matrix $T_{c,a}(b)$ is nothing but the difference of a Toeplitz and a Hankel matrix:
$$T_{c,a}(b)=[u_{i-j}-u_{i+j+1}].$$
By analogy, we may introduce its natural companion as a sum of a Toeplitz and a Hankel matrix:
$$\widetilde{T}_{c,a}(b)=[u_{i-j}+u_{i+j+1}].$$
A truly parallel argument (as in the previous sections) produces a very similar determinantal evaluation:

\begin{theorem} We have
\begin{align*} \det \widetilde{T}_{c,a}(b)=\prod_{i=1}^c\prod_{j=1}^a\frac{(b+i-j)(b+2i+j-1)}{(b+2i-j-1)(i+j-1)}.
\end{align*}
\end{theorem}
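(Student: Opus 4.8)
The plan is to run the argument of Sections~3 and~4 \emph{verbatim}, replacing every difference of binomials by the corresponding sum. Writing $(n,m,x)=(c,a,b)$, so that the $(i,j)$ entry of $\widetilde T_{n,m}(x)$ is $\binom{x+m}{m-i+j}+\binom{x+m}{m-i-j-1}$, I would first establish the constant-term representation
$$\det\widetilde T_{n,m}(x)=CT_{\vec t}\Bigl[\prod_{i=0}^{n-1}(1+t_i)^{x+m}\,t_i^{-x-i-\frac12}\cdot\det_{0\le i,j\le n-1}\bigl(t_i^{j+\frac12}+t_i^{-j-\frac12}\bigr)\Bigr],$$
exactly as in the proof of Theorem~\ref{CT1}; the only change is the plus sign inside the inner determinant, which traces back to the fact that the Toeplitz--plus--Hankel combination $u_{i-j}+u_{i+j+1}$ produces $t_i^{j-i}+t_i^{-i-j-1}$ rather than a difference.

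The genuinely new ingredient is the \emph{symmetric} companion of the odd-type evaluation used via \cite[Lemma~2]{Kratt2}, namely
$$\det_{0\le i,j\le n-1}\bigl(t_i^{j+\frac12}+t_i^{-j-\frac12}\bigr)=\prod_{i=0}^{n-1}(t_i+1)\,t_i^{-n+\frac12}\prod_{0\le i<j\le n-1}(t_i-t_j)(1-t_it_j),$$
which differs from the odd case only through the replacement $(t_i-1)\to(t_i+1)$. Inserting this, applying multilinearity and the Stanton--Stembridge symmetrization precisely as before, the two factors $(1+t_i)^{x+m}$ and $(t_i+1)$ coalesce into $(1+t_i)^{x+m+1}$, and I obtain
$$\det\widetilde T_{n,m}(x)=\frac1{n!}\,CT_{\vec t}\Bigl\{\prod_{i=0}^{n-1}\frac{(1+t_i)^{x+m+1}}{t_i^{x+2n-1}}\prod_{0\le i<j\le n-1}(t_i-t_j)^2(1-t_it_j)\Bigr\}.$$

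For the product side I would mirror Section~3 and Corollary~\ref{after-dodg1}. Introduce the twin Dodgson matrix
$$\widetilde D_n(a,b):=\Bigl[\binom{2i+2a}{i-j+a-b}+\binom{2i+2a}{i-j+a-b-1}\Bigr]_{i,j=0}^{n-1}$$
and prove, by the Desnanot--Jacobi (Dodgson) recurrence of Theorem~\ref{dodg1} together with the base cases $n=1,2$, that
$$\det\widetilde D_n(a,b)=\prod_{i=1}^n\prod_{j=1}^{a-b}\frac{(a+b+i-j)(a+b+2i+j-1)}{(a+b+2i-j-1)(i+j-1)}.$$
Specializing $a=\frac{x+m}2$, $b=\frac{x-m}2$ yields the companion $\widetilde B_{n,m}(x)=\bigl[\binom{x+m+2i}{i-j+m}+\binom{x+m+2i}{i-j+m-1}\bigr]_{i,j=0}^{n-1}$, whose determinant is exactly the right-hand side of the theorem. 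Repeating the row-by-row constant-term extraction for $\widetilde B_{n,m}(x)$ (again only $(t_i-1)\to(t_i+1)$, so that $(1+t_i)^{x+m}(t_i+1)=(1+t_i)^{x+m+1}$) lands on the \emph{identical} symmetrized constant term, whence $\det\widetilde T_{n,m}(x)=\det\widetilde B_{n,m}(x)$ and the theorem follows by transitivity of $=$.

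The main obstacle is the symmetric determinant evaluation above: the antisymmetric version is quoted from \cite[Lemma~2]{Kratt2}, and I expect its even counterpart to come from the same source, or else from a one-line reduction, since each $t^{k+\frac12}+t^{-k-\frac12}$ is a polynomial of degree $k$ in $s:=t^{\frac12}+t^{-\frac12}$, so the matrix factors through the substitution $s_i=t_i^{\frac12}+t_i^{-\frac12}$ into a Vandermonde, which accounts for $\prod_{i<j}(t_i-t_j)(1-t_it_j)$ up to the stated monomial prefactor. Everything else is the verbatim $+$-analogue of calculations already carried out; the coalescence $(1+t_i)^{x+m}(t_i+1)=(1+t_i)^{x+m+1}$ is precisely the source of the one-unit shifts that distinguish the target product from that of Corollary~\ref{after-dodg1}. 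As a fully automated alternative, one could instead feed $\widetilde T_{n,m}(x)$ directly into the holonomic ansatz of Section~5, guessing and certifying the analogue of the certificate $c_{n,j}$.
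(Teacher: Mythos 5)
Your proposal is correct and is essentially the paper's own (unwritten) proof: the paper merely asserts that ``a truly parallel argument'' to Sections 3--4 works, and you have spelled out exactly that argument, with the right twin Dodgson matrix, the right coalescence $(1+t_i)^{x+m}(t_i+1)=(1+t_i)^{x+m+1}$, and the correct symmetric Weyl-denominator evaluation $\det\bigl(t_i^{j+\frac12}+t_i^{-j-\frac12}\bigr)=\prod_i(1+t_i)t_i^{-n+\frac12}\prod_{i<j}(t_i-t_j)(1-t_it_j)$. One small slip in your justification of that evaluation: $t^{k+\frac12}+t^{-k-\frac12}$ is not of degree $k$ in $s=t^{\frac12}+t^{-\frac12}$, but equals $\bigl(t^{\frac12}+t^{-\frac12}\bigr)\,p_k\bigl(t+t^{-1}\bigr)$ with $p_k$ monic of degree $k$, so the matrix reduces to a Vandermonde in $t_i+t_i^{-1}$ after extracting $\prod_i\bigl(t_i^{\frac12}+t_i^{-\frac12}\bigr)$, which yields exactly the stated factorization.
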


\noindent
To give some context, let $f(z)$ be a complex $L_1(\mathcal{C})$ function on the unit circle having Fourier coefficients 
$$u_k=\frac1{2\pi}\int_0^{2\pi}f(e^{i\theta})e^{-ik\theta}d\theta, \qquad i=\sqrt{-1}.$$
Then, in the terminology of Widom \cite{Widom}, the {\em determinant of a Toeplitz matrix with symbol $f$} is given by
$$E_n(f)=\det [u_{j-i}].$$
Toeplitz determinants find homes in many topics, such as statistical physics and random matrix theory. Different types of matrix ensembles lend themselves to classes of Toeplitz plus Hankel matrices of the form
$$\det [u_{j-i}\pm u_{j+i}], \qquad \det [u_{j-i}\pm u_{j+i+1}], \qquad \text{etc.}$$
From a more mathematical vantage point, such determinants arise in the decomposition of the determinants $E_n(f)$. Namely, if the symbol $f$ is an even function (in the sense $f(z)=f(\frac1z)$ on the unit circle), then
\begin{align*}
E_{2n+1}(f)&=\frac12\det [u_{j-i}-u_{j+i+2}]_{0}^{n-1} \times \det [u_{j-i}+u_{j+i}]_0^n, \\
E_{2n}(f)&=\,\,\,\,\, \det [u_{j-i}-u_{j+i+1}]_{0}^{n-1} \times \det [u_{j-i}+u_{j+i}]_0^{n-1}.
\end{align*}
Andrews and Stanton (\cite{Andrews}, see p. 274) involved a somewhat analogous splitting up of a certain determinant that arose in plane partitions although they did not view them as such.

\section{BONUS: CT Identities and Plane Partitions}

\noindent
Borrowing notations from Section 6, the Toeplitz matrix $[u_{i-j}]$ that appeared as a component in our matrix $T_{c,a}(b)$ is in fact recognizable due to its relevance in the theory of plane partitions. 
Indeed, MacMahon \cite{MacMahon} gave an elegant explicit  formula for the number of plane partitions inside an $a\times b\times c$ box:
$$\prod_{i=0}^{a-1}\prod_{j=0}^{b-1}\prod_{k=0}^{c-1}\frac{i+j+k+2}{i+j+k+1}.$$
One may readily associate a determinantal formulation \cite{Kratt4} to this enumeration
$$\det [u_{i-j}]=\det\left[\binom{a+b}{a-i+j}\right]_{i,j=0}^{c-1}=\prod_{i=0}^{a-1}\prod_{j=0}^{b-1}\prod_{k=0}^{c-1}\frac{i+j+k+2}{i+j+k+1}.$$
We have found what appears to be new (for us, at least) matrix whose determinant matches the above product formula and enjoys the expected symmetry.

\begin{lemma} \label{symmetry} It holds true that
$$\det\left[\binom{i+j+a+b}{i+a}\right]_{i,j=0}^{c-1}=\prod_{i=0}^{a-1}\prod_{j=0}^{b-1}\prod_{k=0}^{c-1}\frac{i+j+k+2}{i+j+k+1}.$$
\end{lemma}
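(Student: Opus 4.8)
The plan is to avoid re‑deriving MacMahon's formula from scratch and instead to reduce the new determinant to the one recorded immediately before the statement, namely $\det\bigl[\binom{a+b}{a-i+j}\bigr]_{i,j=0}^{c-1}$, whose value is exactly the asserted product. Write $M$ for the matrix of the Lemma, $M_{i,j}=\binom{i+j+a+b}{i+a}$, and $N$ for the MacMahon matrix, $N_{i,j}=\binom{a+b}{a-i+j}$. The idea is to exhibit two unit‑determinant ``Pascal'' matrices $P$ and $U$, together with a transposition, that carry $N$ to $M$, so that $\det M=\det N$ follows from the very transitivity of $=$ that this paper already champions.

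Concretely, first I would set $P:=\bigl[\binom{i}{k}\bigr]_{i,k=0}^{c-1}$, which is lower triangular with unit diagonal, so $\det P=1$. The Chu--Vandermonde convolution then gives $(PN)_{i,j}=\sum_{k}\binom{i}{k}\binom{a+b}{a-k+j}=\binom{a+b+i}{a+j}$, where truncating the sum at $k=c-1$ discards nothing since $\binom{i}{k}=0$ once $k>i$ and $i\le c-1$. Next I would set $U:=\bigl[\binom{j}{\ell}\bigr]_{\ell,j=0}^{c-1}$, upper triangular with unit diagonal, so $\det U=1$, and apply Chu--Vandermonde a second time, using $\binom{j}{\ell}=\binom{j}{j-\ell}$, to obtain $(PNU)_{i,j}=\sum_{\ell}\binom{a+b+i}{a+\ell}\binom{j}{j-\ell}=\binom{a+b+i+j}{a+j}$, again with no loss from the truncation because $\binom{j}{j-\ell}$ vanishes unless $0\le\ell\le j$.

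The decisive observation is then purely cosmetic: $(PNU)_{i,j}=\binom{a+b+i+j}{a+j}$ turns into $M_{i,j}=\binom{a+b+i+j}{a+i}$ upon interchanging $i$ and $j$, so $M=(PNU)^{\mathsf T}$. Since transposition preserves the determinant and $\det P=\det U=1$, this yields $\det M=\det(PNU)=\det N$, and the product formula for $\det N$ recorded just above the Lemma (via MacMahon) completes the argument.

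I expect the only genuine subtlety to be bookkeeping rather than mathematics: one must confirm that both Chu--Vandermonde collapses survive the restriction of the binomial matrices to the range $0\le i,j\le c-1$, and that $P$ and $U$ are truly unitriangular there (they are, precisely because $\binom{i}{k}$ and $\binom{j}{\ell}$ vanish off the relevant side of the diagonal). The creative step---guessing the two Pascal conjugators and spotting that the result is the transpose of $M$---is where the real work lies; once they are written down, every identity invoked is a one‑line Vandermonde, and no new special‑function machinery is required.
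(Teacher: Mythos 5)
Your argument is correct: both Chu--Vandermonde collapses check out (the truncations are indeed harmless because $\binom{i}{k}=0$ for $k>i$ and $\binom{j}{j-\ell}=0$ for $\ell\notin[0,j]$, and all nonzero terms of the untruncated convolutions fall inside $0\le k,\ell\le c-1$), the matrices $P$ and $U$ are genuinely unitriangular, and $(PNU)^{\mathsf T}=M$ as you claim, so $\det M=\det N$ and MacMahon's product formula for $\det N$ finishes the job. This is, however, not the route the paper takes. The paper gives two proofs: a one-line appeal to Dodgson's condensation (an induction on $n$ via the condensation recurrence, which requires already knowing the closed form), and a combinatorial proof due to Krattenthaler that interprets both $\det N$ and $\det M$ as Lindstr\"om--Gessel--Viennot counts of the same families of non-intersecting lattice paths, the passage from one to the other being effected by prepending and appending ``forced'' path segments. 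Your Pascal-matrix factorization is in effect the algebraic shadow of that bijection --- the forced initial and terminal steps correspond precisely to the left and right unitriangular conjugators --- but it replaces the lattice-path bookkeeping with two explicit Vandermonde convolutions. What each approach buys: Krattenthaler's proof explains \emph{why} the new determinant counts plane partitions in an $a\times b\times c$ box (it exhibits the bijection), the condensation proof is the shortest to state but is pure verification, and yours is self-contained elementary linear algebra modulo the already-cited evaluation of $\det N$, requiring no combinatorial model and no prior guess of the answer for $M$ itself.
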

\begin{proof} This is an immediate application of Dodgson's condensation \cite{Zeil1}.
\end{proof}

\begin{remark}
The third author has previously employed the same approach towards furnishing a simple proof \cite{Zeil2} of a determinant by MacMahon \cite{MacMahon}.
\end{remark}

\begin{proof} Here is a combinatorial proof for Lemma \ref{symmetry} given by C. Krattenthaler \cite{Kratt3}. We thank him for allowing us to reproduce it. 

\smallskip
\noindent
Plane partitions in an $a\times b\times c$ box are in bijection with
families $(P_0,P_1,\dots,P_{c-1})$ of non-intersecting lattice paths,
where $P_i$ runs from $(-i-a,i)$ to $(-i,i+b)$, $i=0,1,\dots,c-1$.
This is explained in Section~3.3 of Bressoud's book \cite{Bressoud}.
An example for such a family of paths for $a=6$ and $b=c=3$ is displayed in the
figure below.
$$
\Einheit=.7cm
\Gitter(1,6)(-8,0)
\Koordinatenachsen(1,6)(-8,0)
\Pfad(0,3),556656555\endPfad
\Pfad(-1,4),555566655\endPfad
\Pfad(-2,5),555556656\endPfad
\DickPunkt(-6,0)
\DickPunkt(-7,1)
\DickPunkt(-8,2)
\DickPunkt(0,3)
\DickPunkt(-1,4)
\DickPunkt(-2,5)
\hskip-5cm
$$
By the Lindstr\"om--Gessel--Viennot theorem \cite{Gessel}, this leads to the determinant
$$
\det_{0\le i,j\le c-1}\left(\binom {a+b} {a-i+j}\right)
$$
for the enumeration of both objects being counted here.

\noindent
We may prepend and append ``forced" path pieces, see the thick path portions in
the figure below.
$$
\Einheit=.7cm
\Gitter(1,6)(-8,0)
\Koordinatenachsen(1,6)(-8,0)
\Pfad(0,3),556656555\endPfad
\Pfad(-1,4),555566655\endPfad
\Pfad(-2,5),555556656\endPfad
\DickPunkt(-6,0)
\DickPunkt(-7,1)
\DickPunkt(-8,2)
\DickPunkt(0,3)
\DickPunkt(-1,4)
\DickPunkt(-2,5)
\PfadDicke{2.5pt}
\Pfad(-7,0),2\endPfad
\Pfad(-8,0),22\endPfad
\Pfad(0,4),5\endPfad
\Pfad(0,5),55\endPfad
\hskip-5cm
$$
In this manner, we obtain 
families $(Q_0,Q_1,\dots,Q_{c-1})$ of non-intersecting lattice paths,
where $Q_i$ runs from $(-i-a,0)$ to $(0,i+b)$, $i=0,1,\dots,c-1$.
The corresponding Lindstr\"om--Gessel--Viennot determinant \cite{Gessel} is
$$
\det_{0\le i,j\le c-1}\left(\binom {i+j+a+b} {i+a}\right).
$$
The proof is complete.
\end{proof}

\begin{remark} It is not hard to develop a $q$-analog of Lemma \ref{symmetry}:
$$\det\left[q^{i^2-ij}\binom{i+j+a+b}{i+a}_q\right]_{i,j=0}^{c-1}=\prod_{i=0}^{a-1}\prod_{j=0}^{b-1}\prod_{k=0}^{c-1}\frac{1-q^{i+j+k+2}}
{1-q^{i+j+k+1}}.$$
It also retains s similar proof as in above.

\end{remark}

\smallskip
\noindent
In the context of Selberg's integrals, there are equivalently-stated constant term identities, and we prove (a special case of) one due to Morris~\cite{Morris} associated with the root system $A_n$.

\begin{proposition} We have
\begin{align*}
CT_{\vec{t}}\,\prod_{i=0}^{c-1}(1+t_i)^a\left(1+t_i^{-1}\right)^b\prod_{ i\neq j}^{0,c-1}\left(1-t_jt_i^{-1}\right)^m 
=\prod_{\ell=0}^{c-1}\frac{(a+b+\ell m)!\,((\ell+1)m)!}{(a+\ell m)!\,(b+\ell m)!\, m!}.
\end{align*}
\end{proposition}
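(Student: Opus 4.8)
The plan is to turn this $c$-fold constant term into a genuine integral and then recognize the result as a Selberg integral of Cauchy type, whose value is classical. First I would write $CT_{\vec t}(\,\cdot\,)=\frac{1}{(2\pi i)^c}\oint\cdots\oint(\,\cdot\,)\prod_{j=0}^{c-1}\frac{dt_j}{t_j}$, each contour being the unit circle $|t_j|=1$; this is legitimate since for nonnegative integers $a,b,m$ the integrand is a Laurent polynomial and the constant term is exactly its average over the torus.

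Next I would apply the Cayley transform $t_j=\frac{1+is_j}{1-is_j}$, which maps each circle onto the real line $s_j\in\mathbb{R}$. A direct computation gives $1+t_j=\frac{2}{1-is_j}$, $1+t_j^{-1}=\frac{2}{1+is_j}$, $1-t_j/t_i=\frac{2i(s_i-s_j)}{(1-is_j)(1+is_i)}$, and $\frac{dt_j}{t_j}=\frac{2i\,ds_j}{(1-is_j)(1+is_j)}$. Pairing the factor for $(i,j)$ with that for $(j,i)$ turns $\prod_{i\neq j}(1-t_j/t_i)^m$ into $(-4)^{m\binom{c}{2}}\prod_{i<j}(s_i-s_j)^{2m}\prod_j(1+s_j^2)^{-m(c-1)}$, and after collecting all single-variable factors the left-hand side becomes a scalar multiple of the Cauchy-weight Selberg integral $\int_{\mathbb{R}^c}\prod_j(1-is_j)^{-\alpha}(1+is_j)^{-\beta}\prod_{i<j}(s_i-s_j)^{2m}\,ds$, with $\alpha=a+m(c-1)+1$ and $\beta=b+m(c-1)+1$.

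I would then invoke the classical evaluation of this real-line integral (the Cauchy avatar of Selberg's integral) in terms of Gamma functions and match it against the claimed right-hand side. The reconciliation is pleasantly clean: setting $\ell=c-1-j$, the factors $\Gamma(\alpha-jm)=\Gamma(a+\ell m+1)$ and $\Gamma(\beta-jm)=\Gamma(b+\ell m+1)$ reproduce the denominator, the factor $\Gamma(\alpha+\beta-(c-1+j)m-1)=\Gamma(a+b+\ell m+1)$ reproduces the numerator, and the $m$-dependent factors supply $\prod_{j=0}^{c-1}\Gamma((j+1)m+1)$ together with $\Gamma(m+1)^c=(m!)^c$, exactly the product $\prod_{\ell=0}^{c-1}\frac{(a+b+\ell m)!\,((\ell+1)m)!}{(a+\ell m)!\,(b+\ell m)!\,m!}$.

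The main obstacle I anticipate is bookkeeping that is routine in spirit but genuinely delicate: one must check that the accumulated scalar prefactor — the powers of $2$ from $(1+t_j)^a(1+t_j^{-1})^b$, the $(-4)^{m\binom{c}{2}}$ from the Vandermonde pairing, the $(2i)^c$ from the measure, and the $(2\pi i)^{-c}$ out front — collapses to exactly $1$, and that the branch choices render the transformed integrand real on $\mathbb{R}^c$ so that the Selberg evaluation applies. Convergence of the real-line integral needs $\mathrm{Re}(\alpha+\beta)$ large, but since for fixed $c,m$ both sides are polynomials in $a,b$, the identity propagates to all values by analytic continuation, and the case $c=1$ (where both sides equal $\binom{a+b}{a}$) pins down the normalization. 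As an alternative entirely in the automated spirit of this paper, one could instead bypass Selberg and verify, via creative telescoping, that the constant term and the product on the right satisfy the same first-order recursion in $c$ with matching value at $c=1$.
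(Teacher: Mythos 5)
Your route is genuinely different from the paper's, and also more ambitious: you aim at the full Morris identity for general $m$, whereas the paper explicitly proves only the special case $m=1$. The paper's argument is elementary and self-contained: it starts from Lemma~6 (the determinant $\det[\binom{i+j+a+b}{i+a}]$, evaluated by Dodgson condensation to MacMahon's box product), writes each entry as a one-variable constant term, pulls the $CT$ out of the determinant by multilinearity, recognizes the inner determinant as a Vandermonde in $1+t_i^{-1}$, and then symmetrizes over $\mathfrak{S}_c$ (the Stanton--Stembridge trick) to manufacture the factor $\prod_{i\neq j}(1-t_j/t_i)$ --- which is exactly why it cannot reach $m>1$. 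You instead convert the constant term into a torus integral, Cayley-transform to the real line, and appeal to the Cauchy-type Selberg integral; this is essentially Morris's own classical derivation, and it buys the general-$m$ statement at the price of importing Selberg's evaluation as a black box. Your local computations ($1+t=2/(1-is)$, $dt/t=2i\,ds/(1+s^2)$, the cross terms) check out, up to a sign: pairing $(i,j)$ with $(j,i)$ gives $(2i)^2(s_i-s_j)(s_j-s_i)=+4(s_i-s_j)^2$, so the prefactor is $4^{m\binom{c}{2}}$, not $(-4)^{m\binom{c}{2}}$.

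Three caveats you should tighten. First, the constant: the power of $2$ in the known evaluation of $\frac{1}{(2\pi)^c}\int_{\mathbb{R}^c}\prod_j(1+is_j)^{-\alpha}(1-is_j)^{-\beta}\prod_{i<j}|s_i-s_j|^{2m}\,ds$ depends on $c$, so checking $c=1$ does \emph{not} pin down the normalization for all $c$; you must track the exponent of $2$ explicitly and verify it cancels for every $c$. Second, your analytic-continuation remark is both wrong and unnecessary: neither side is a polynomial in $a,b$ (e.g.\ $CT_t(1+t)^a(1+t^{-1})^b=\binom{a+b}{a}$ is not), but no continuation is needed because the transformed integral already converges for all nonnegative integers $a,b$ (the integrand decays like $|s|^{-(a+b+2)}$ in each variable), and branch issues are vacuous for integer exponents. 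Third, beware of circularity: in parts of the literature the Cauchy-type (``two-sided'') Selberg integral is itself deduced from Morris's constant term identity, so you must cite a derivation of it that proceeds directly from Selberg's integral on $[0,1]^c$ via contour deformation. With those points repaired, your outline is a valid proof of the general statement, and your closing suggestion (a first-order recurrence in $c$ via creative telescoping) is a reasonable fallback in the spirit of the paper's Section~5.
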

\begin{proof} We are only proving the special case $m=1$. Start with the determinant from Lemma \ref{symmetry} where the method of constant term extraction is brought to bear:
\begin{align*}
\det\left[\binom{i+j+a+b}{i+a}\right]&=\det\left[CT_{t_i} \left(t_i^{i+a}\left(1+\frac1{t_i}\right)^{i+j+a+b}\right)\right] \\
&=\det\left[CT_{t_i}\left((1+t_i)^{i+a}\left(1+\frac1{t_i}\right)^b\left(1+\frac1{t_i}\right)^j\right)\right] \\
&=CT_{\vec{t}}\,\,\,\prod_{i=0}^{c-1}(1+t_i)^{i+a}\left(1+\frac1{t_i}\right)^b\cdot \det\left[\left(1+\frac1{t_i}\right)^j\right] \\
&=CT_{\vec{t}}\,\,\,\prod_{i=0}^{c-1}(1+t_i)^{i+a}\left(1+\frac1{t_i}\right)^b\cdot \prod_{i<j}^{0,c-1}(t_j^{-1}-t_i^{-1}) \\
&=\frac1{c!}\,CT_{\vec{t}}\,\,\,\prod_{i=0}^{c-1}(1+t_i)^a\left(1+\frac1{t_i}\right)^b\cdot \prod_{i<j}^{0,c-1}(t_j-t_i)(t_j^{-1}-t_i^{-1}) \\
&=\frac1{c!}\,CT_{\vec{t}}\,\,\,\prod_{i=0}^{c-1}(1+t_i)^a\left(1+\frac1{t_i}\right)^b\cdot \prod_{0\leq i\neq j\leq c-1}\left(1-\frac{t_j}{t_i}\right).
\end{align*}
From Lemma \ref{symmetry}, again, we gather that
\begin{align*}
CT_{\vec{t}}\,\prod_{i=0}^{c-1}(1+t_i)^a\left(1+\frac1{t_i}\right)^b\cdot \prod_{i\neq j}^{0,c-1}\left(1-\frac{t_j}{t_i}\right)
=c!\cdot \prod_{i=0}^{a-1}\prod_{j=0}^{b-1}\prod_{k=0}^{c-1}\frac{i+j+k+2}{i+j+k+1}.
\end{align*}
Further algebraic manipulation confirms the desired conclusion.
\end{proof}

\noindent
We will strengthen the discussion by providing yet another new proof for a special case of a constant term identity due to Macdonald for the $BC_n$ root system~\cite{Morris}. We need some preliminary work first. To this end, recall the {\em super Catalan numbers}~\cite{Gessel2} defined by
$$S_{i,j}:=\frac{(2i)!\,(2j)!}{2\,i!\,j!\,(i+j)!}.$$
\begin{lemma} \label{pre-Mac} We have
$$\det [S_{i+a,j+b}]_{i,j=1}^n=\frac{(-1)^{\binom{n}2}}{2^n\,n!} \prod_{i=1}^n\frac{(2a+2i)!\,(2b+2i)!\, i!}{(a+i)!\,(b+i)!\,(a+b+n+i)!}.$$
\end{lemma}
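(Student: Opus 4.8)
The plan is to peel off from \(\det[S_{i+a,j+b}]_{i,j=1}^n\) every factor that depends on a single index, reducing the problem to one ``pure'' Hankel determinant, then to evaluate that Hankel determinant directly, and finally to reassemble and simplify. Writing out \(S_{i+a,j+b}=\frac{(2(i+a))!\,(2(j+b))!}{2\,(i+a)!\,(j+b)!\,(i+j+a+b)!}\), I observe that the factor \(\tfrac12\) is common to every entry, that \((2(i+a))!/(i+a)!\) depends only on the row index \(i\), and that \((2(j+b))!/(j+b)!\) depends only on the column index \(j\). Pulling \(\tfrac12\) out of each of the \(n\) rows, the \(i\)-dependent factor out of row \(i\), and the \(j\)-dependent factor out of column \(j\) by multilinearity, I arrive at
\[
  \det[S_{i+a,j+b}]_{i,j=1}^n
  =\frac{1}{2^n}\prod_{i=1}^n\frac{(2a+2i)!}{(a+i)!}\cdot\prod_{i=1}^n\frac{(2b+2i)!}{(b+i)!}\cdot D_n(a+b),
\]
where \(D_n(c):=\det\bigl[\tfrac{1}{(i+j+c)!}\bigr]_{i,j=1}^n\).

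The crux is the evaluation of \(D_n(c)\). I would factor \(1/(i+c+n)!\) out of row \(i\), which turns the \((i,j)\) entry into
\[
  \frac{(i+c+n)!}{(i+j+c)!}=\prod_{k=j+1}^{n}(i+c+k),
\]
a monic polynomial in \(i\) of degree \(n-j\). Because these degrees are distinct, running through \(n-1,n-2,\dots,0\) as \(j=1,\dots,n\), the lower-order terms of each column can be cleared by subtracting suitable multiples of the columns of strictly smaller degree; this does not change the determinant and replaces the \((i,j)\) entry by the monomial \(i^{\,n-j}\). The resulting matrix \(\bigl[i^{\,n-j}\bigr]_{i,j=1}^n\) is a Vandermonde with its columns listed in decreasing degree, so its determinant equals \((-1)^{\binom n2}\prod_{i=1}^n(i-1)!\). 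Hence
\[
  D_n(c)=(-1)^{\binom n2}\prod_{i=1}^n\frac{(i-1)!}{(i+c+n)!}.
\]

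Substituting \(c=a+b\) and merging the three products over the common index \(i\) gives
\[
  \det[S_{i+a,j+b}]_{i,j=1}^n
  =\frac{(-1)^{\binom n2}}{2^n}\prod_{i=1}^n\frac{(2a+2i)!\,(2b+2i)!\,(i-1)!}{(a+i)!\,(b+i)!\,(a+b+n+i)!},
\]
and the elementary identity \(\prod_{i=1}^n i!=n!\prod_{i=1}^n(i-1)!\) rewrites \((i-1)!\) as \(i!/n!\), producing exactly the claimed right-hand side. The one genuinely error-prone point is the sign \((-1)^{\binom n2}\): the column reversal and the orientation of the Vandermonde must be tracked carefully. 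Should this bookkeeping feel too delicate, an alternative in the spirit of the rest of the paper is available, since \(D_n(c)\) satisfies the Desnanot--Jacobi (Dodgson) recurrence \(D_n(c)\,D_{n-2}(c+2)=D_{n-1}(c)\,D_{n-1}(c+2)-D_{n-1}(c+1)^2\); one then merely checks that the proposed closed form satisfies this recurrence together with the base cases \(D_0=1\) and \(D_1(c)=1/(c+2)!\), which avoids the Vandermonde sign subtlety altogether.
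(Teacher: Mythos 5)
Your proof is correct, and it takes a genuinely different route from the paper's. The paper disposes of Lemma~\ref{pre-Mac} in one line by applying Dodgson's condensation directly to $\det[S_{i+a,j+b}]$: one checks that the conjectured product satisfies the Desnanot--Jacobi recurrence in the shifted parameters $(a,b)\mapsto(a+1,b+1)$ etc., together with the cases $n=1,2$. You instead strip off the row factors $(2a+2i)!/(a+i)!$, the column factors $(2b+2j)!/(b+j)!$, and the global $2^{-n}$, reducing everything to the Hankel determinant $D_n(c)=\det[1/(i+j+c)!]$, which you evaluate in closed form by extracting $1/(i+c+n)!$ from row $i$ and recognizing the remaining columns as monic polynomials of distinct degrees in $i$, hence column-reducible to a reversed Vandermonde; the sign $(-1)^{\binom n2}$ from the column reversal and the final $\prod(i-1)!=\prod i!/n!$ bookkeeping both check out (and the formula verifies at $n=1,2$). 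What your approach buys is a fully explicit, self-contained derivation that explains where each factor and the sign come from, rather than a guess-and-check against a recurrence; what the paper's approach buys is brevity and uniformity, since the same condensation template is reused throughout the paper (Theorem~\ref{dodg1}, Lemma~\ref{symmetry}, the $q$-analogue). Your closing remark that one could alternatively verify the Desnanot--Jacobi recurrence for $D_n(c)$ (with the correct index shifts $D_n(c)D_{n-2}(c+2)=D_{n-1}(c)D_{n-1}(c+2)-D_{n-1}(c+1)^2$) is essentially the paper's method transported to the reduced determinant, so you have in effect supplied both proofs.
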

\begin{proof} A direct application of Dodgson's condensation settles the argument. 
\end{proof}

\begin{corollary} Let $\vec{t}=(t_1,\dots,t_n)$. The following identity holds true
\begin{align*}
CT_{\vec{t}}\,&\prod_{i=1}^n(1-t_i)^{\alpha}\left(1-\frac1{t_i}\right)^{\alpha}(1-t_i^2)^{\beta}\left(1-\frac1{t_i^2}\right)^{\beta} \,\, \times \\
& \qquad \qquad \qquad \qquad \prod_{i<j}\left(1-\frac{t_j}{t_i}\right)\left(1-\frac{t_i}{t_j}\right)(1-t_it_j)\left(1-\frac1{t_it_j}\right) \\
=& \prod_{i=1}^n\frac{(2\beta+2i-2)!\,(2\alpha+2\beta+2i-2)!\,i!}{(\beta+i-1)!\,(\alpha+\beta+i-1)!\,(\alpha+2\beta+n+i-2)!}.
\end{align*}
\end{corollary}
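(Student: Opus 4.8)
\medskip
\noindent
\textbf{Proof proposal.} The plan is to reuse, essentially verbatim, the constant-term extraction machinery from the proof of the preceding Proposition, but to seed it with the super Catalan determinant of Lemma~\ref{pre-Mac} rather than the binomial determinant of Lemma~\ref{symmetry}. I begin with a purely bookkeeping reduction. Comparing the factorials on the right-hand side of the asserted identity with those produced by Lemma~\ref{pre-Mac}, one sees that the substitution $a=\alpha+\beta-1$, $b=\beta-1$ matches the two products term by term: indeed $2a+2i=2\alpha+2\beta+2i-2$, $2b+2i=2\beta+2i-2$, $a+i=\alpha+\beta+i-1$, $b+i=\beta+i-1$, and $a+b+n+i=\alpha+2\beta+n+i-2$. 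Consequently it suffices to prove that the constant term on the left-hand side equals $2^n\,n!\,(-1)^{\binom n2}\det[S_{i+a,j+b}]_{i,j=1}^n$ for this choice of $a$ and $b$; Lemma~\ref{pre-Mac} then supplies the closed form immediately.

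\medskip
\noindent
To establish this, I would run the argument of the Proposition in reverse. First note that the per-variable weight factors as
\[
  (1-t_i)^{\alpha}(1-t_i^{-1})^{\alpha}(1-t_i^2)^{\beta}(1-t_i^{-2})^{\beta}
  =(1-t_i)^{a+1}(1-t_i^{-1})^{a+1}(1+t_i)^{b+1}(1+t_i^{-1})^{b+1},
\]
where $\alpha+\beta=a+1$ and $\beta=b+1$. The pairwise product $\prod_{i<j}(1-t_j/t_i)(1-t_i/t_j)(1-t_it_j)(1-t_i^{-1}t_j^{-1})$ factors, up to an explicit monomial, as the product of the $C_n$-type antisymmetric determinant $\det[t_i^{\nu_j}-t_i^{-\nu_j}]$ with its image under $t_i\mapsto t_i^{-1}$, exactly as $\prod_{i<j}(t_i-t_j)(1-t_it_j)$ arises from \cite[Lemma~2]{Kratt2} in the proof of Theorem~\ref{CT1}. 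Invoking the invariance of $CT_{\vec t}$ under $\mathfrak{S}_n$ --- the very Stanton--Stembridge symmetrization used above --- I would absorb the factor $1/n!$ together with one copy of this antisymmetric determinant, thereby rewriting the entire constant term as a single determinant $\det[\,CT_{t_i}(\cdots)\,]_{i,j=1}^n$ whose $(i,j)$ entry is a one-variable constant term. This is precisely the chain of manipulations of the Proposition read from bottom to top, so each individual step (multilinearity, the evaluation of $\det[t_i^{\nu_j}-t_i^{-\nu_j}]$ via \cite[Lemma~2]{Kratt2}, and the symmetrization) is already justified.

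\medskip
\noindent
The crux --- and the step I expect to be the main obstacle --- is the resulting \emph{one-variable} evaluation, namely the identification of the matrix entries as super Catalan numbers:
\[
  CT_{t}\Big[(1-t)^{a+1}(1-t^{-1})^{a+1}(1+t)^{b+1}(1+t^{-1})^{b+1}\,
    t^{-\mu_i}\bigl(t^{\nu_j}-t^{-\nu_j}\bigr)\Big]
  \;=\;\pm\,2\,S_{i+a,\,j+b}
\]
for the shifts $\mu_i,\nu_j$ dictated by Krattenthaler's lemma. The difficulty is that the coupled denominator $(i+j+a+b)!$ in $S_{i+a,j+b}$ does not split over rows and columns the way the binomial top did in the Proposition; what rescues the argument is that the left-hand side above is a \emph{single} balanced binomial sum --- a convolution of the $(1\pm t)$ expansions against the monomial --- which collapses to a ratio of factorials. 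A clean sanity check is the base case $CT_t[(1-t)^c(1-t^{-1})^c]=\binom{2c}{c}$, and the general evaluation can be certified either by a Chu--Vandermonde convolution or, failing a slick manual proof, by Zeilberger's algorithm~\cite{Zeil4}. Once this one-variable identity is in hand, the factor $2$ in the definition of $S_{i,j}$ and the antisymmetry in $\nu_j$ account respectively for the constants $2^n$ and $(-1)^{\binom n2}$, completing the reduction to $2^n\,n!\,(-1)^{\binom n2}\det[S_{i+a,j+b}]$ and hence, via Lemma~\ref{pre-Mac}, the Corollary.
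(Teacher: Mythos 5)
Your overall architecture is the paper's own, just read in the opposite direction: the paper starts from $\det[S_{i+a,j+b}]$, writes each entry as a one-variable constant term, and expands outward via multilinearity and the Stanton--Stembridge symmetrization, with exactly your substitution $a=\alpha+\beta-1$, $b=\beta-1$ and exactly your bookkeeping constant $2^n\,n!\,(-1)^{\binom n2}$ (the paper's $\gamma_n=(-1)^{\binom{n+1}2}(-1)^{na}2^{-n}$). You also correctly locate the crux in a one-variable constant-term representation of the super Catalan numbers.

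The one place your sketch would derail is the de-symmetrization basis. The cross product $\prod_{i<j}(1-t_j/t_i)(1-t_i/t_j)(1-t_it_j)(1-\tfrac1{t_it_j})$ equals $\prod_{i<j}\bigl(t_i+t_i^{-1}-t_j-t_j^{-1}\bigr)^2$, a \emph{squared Vandermonde in $s=t+t^{-1}$ with no per-variable symplectic factor}. Writing it as a $C_n$-type determinant $\det[t_i^{\nu_j}-t_i^{-\nu_j}]$ times its image under $t_i\mapsto t_i^{-1}$ is not ``up to a monomial'': by the Weyl denominator formula (Lemma~2 of \cite{Kratt2}) this forces in an extra $\prod_i(t_i-t_i^{-1})^2$, which is absent from the weight here (the weight is \emph{symmetric} under $t\mapsto t^{-1}$, unlike in the proof of Theorem~\ref{CT1} where a factor $(t_i-1)$ is available), and the resulting entries are not super Catalan numbers. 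What the paper actually does is realize one Vandermonde copy as $\det[(2+t_i+t_i^{-1})^{j-1}]$ and the other as $\prod_i(-2+t_i+t_i^{-1})^{i-1}$, then use $2+t+t^{-1}=(1+t)^2/t$ and $-2+t+t^{-1}=(1-t)^2/t$ so that the column index lands in the exponent of $(1+t)$ and the row index in the exponent of $(1-t)$; the de-symmetrized $(i,j)$ entry is then literally
\[
  (-1)^{i+a}\,CT_t\,\frac{(1-t)^{2i+2a}(1+t)^{2j+2b}}{2\,t^{i+j+a+b}}=S_{i+a,j+b},
\]
which is von Szily's identity $\sum_k(-1)^k\binom{2I}{I+k}\binom{2J}{J+k}=2S_{I,J}$ --- a single balanced sum certifiable by Zeilberger's algorithm, as you anticipated. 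With this replacement your argument closes and coincides with the paper's proof; with the $t^{\nu_j}-t^{-\nu_j}$ basis as written, the identity you flag as the main obstacle is not merely hard but false in that form.
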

\begin{proof} Start with the observation that
$$S_{i+a,j+b}=(-1)^{i+a}\,CT_t\frac{(1-t)^{2i+2a}(1+t)^{2j+2b}}{2\,t^{i+a+j+b}}$$
and proceed as usual
\begin{align*}
&\det [S_{i+a,j+b}]_1^n=\det\left[(-1)^{i+a}\,CT_{t_i}\frac{(1-t_i)^{2i+2a}(1+t_i)^{2j+2b}}{2\,t_i^{i+a+j+b}}\right]_1^n \\
&=\gamma_n\,\det\left[CT_{t_i}\frac{(1-t_i)^{2i+2a}(1+t_i)^{2b+2}}{t_i^{i+a+b+1}} \left(2+t_i+\frac1{t_i}\right)^{j-1}\right] \\
&=\gamma_n\, CT_{\vec{t}}\,\prod_{i=1}^n \frac{(1-t_i)^{2i+2a}(1+t_i)^{2b+2}}{t_i^{i+a+b+1}}\prod_{i<j}\left(t_j+\frac1{t_j}-t_i-\frac1{t_i}\right) \\
&=\gamma_n\, CT_{\vec{t}}\,\prod_{i=1}^n\frac{(1-t_i)^{2a+2}(1+t_i)^{2b+2}(-2+t_i+\frac1{t_i})^{i-1}}{t_i^{a+b+2}}\prod_{i<j}(t_j-t_i)\left(1-\frac1{t_it_j}\right) \\
&=\frac{\gamma_n}{n!}\, CT_{\vec{t}}\,\prod_{i=1}^n\frac{(1-t_i)^{2a+2}(1+t_i)^{2b+2}}{t_i^{a+b+2}}\prod_{i<j}(t_j-t_i)^2\left(1-\frac1{t_it_j}\right)^2 \\
&=\frac{(-1)^{n(a+1)}\gamma_n}{n!}\, CT_{\vec{t}}\,\prod_{i=1}^n(1-t_i)^{a+1}\left(1-\frac1{t_i}\right)^{a+1}(1+t_i)^{b+1}\left(1+\frac1{t_i}\right)^{b+1} \\
& \qquad \qquad \qquad \qquad \times \prod_{i<j}\left(1-\frac{t_j}{t_i}\right)\left(1-\frac{t_i}{t_j}\right)(1-t_it_j)\left(1-\frac1{t_it_j}\right);
\end{align*}
where $\gamma_n:=(-1)^{\binom{n+1}2}(-1)^{na}2^{-n}$. The rest follows from applying Lemma \ref{pre-Mac}, from above, with $a=\alpha+\beta-1$ and $b=\beta-1$ and some algebraic simplifications.
\end{proof}

\section{{\bf Postscript:} The Fourth Method - Ask Determinant guru Christian Krattenthaler}

\noindent
After the first version of this paper was written, we sent it out to a few experts, and to our initial {\it horror}, we got the following email from
Christian Krattenthaler, that he kindly allowed us to quote.

\smallskip
\noindent

{\it
``Obviously, I agree that determinants are {\rm (}*not* non-intuitive{\rm)} = intuitive, 

extremely useful, and that linear algebra without determinants is an aberration. 

{\rm (}We had in fact a colleague at our determinant {\rm(}sic!{\rm)} - working in harmonic 

analysis - who also believed in
- and taught - a determinant-free linear algebra; 

his poor students had to learn determinants afterwards ...{\rm)}

Concerning the determinant that you look at: it is the special case of {\rm(3.18)} in 

my ``Advanced Determinant Calculus'' \cite{Kratt2}
where $q=1$, $A=x+m$, $L_i=i-m$. 

This identity {\bf has} a simple direct proof. As explained above Theorem $30$ in \cite{Kratt2},

after having taken out appropriate factors, the determinant turns out to be a 

special case of Lemma 5 in \cite{Kratt2}. That lemma is so general so that it can be 

proved in various ways, one of which is explained in Ref. {\rm [88]} of \cite{Kratt2}.'' 
}

\smallskip
\noindent
Indeed this is an extreme case of {\it P\'olya's dictum} already alluded to above. The general lemma, in {\it hindsight}, is almost trivial to prove, {\it once known}, but coming up with it, is {\it anything but}.

\smallskip
\noindent
At first,  we {\it kicked ourselves} for not asking guru Christian right away, and saving us the {\it trouble} of doing it ourselves. But this is the cowardly way. This `trouble' was lots of fun, and
it brings us to yet another take-home message.

\smallskip
\noindent
{\bf Don't rush to ask the expert!}, it is like `peeking at the answer at the back of the textbook'.
You will learn much more by trying to do it {\it all by yourself!} Of course, if you finally give up, after an honest effort, then you can contact Professor Krattenthaler.
Also if you succeed, and want to make sure that you were not scooped, do likewise, but {\bf not right away!}.

\begin{remark}
The determinant (3.18) in \cite{Kratt2}, generalizing our Conjecture \ref{conj1}, was used in the proof of {\em refinements of the MacMahon ex-Conjecture} and the
{\em Bender-Knuth ex-Conjecture}. 
\end{remark}

\section{The Revenge of Rev. Charles}

\noindent
At times life rides like a domino-effect. The last section exhibited a notion of thinking retroactively - by way of consulting experts. This prompted us to have another look at the original problem
which would {\em really} make our paper even more self-contained.
In this section, we revisit our running example - the matrix
$$T_{n,m}(x):=\left[\binom{x+m}{j-i+m}-\binom{x+m}{m-i-j+1}\right]_{1\leq i,j\leq n},$$
for which we had the formula
$$\det T_{n,m}(x)=\prod_{i=1}^n\prod_{j=1}^m\frac{(x+i-j)(x+2i+j-2)}{(x+2i-j)(i+j-1)}.$$
We have had a ton of adventures in proving and applying it. As noted above, Krattenthaler claimed priority by providing a generalized determinant evaluation. Fair enough.
We, too, have another {\em idea} in trivializing the proof. Follow us.

\smallskip
\noindent
{\bf Step 1.} Change variables: $x=a+b, m=b-a$. Rename $T_{n,m}(x)$ to write
$$U_n(a,b)=\left[\binom{2b}{j-i+b-a}-\binom{2b}{b-a-i-j+1}\right]_{1\leq i,j\leq n}.$$
{\bf Step 2.} This is the {\color{red} key}: use the symmetry $\binom{A}{B}=\binom{A}{A-B}$ on the second binomial coefficient so that
$$U_n(a,b)=\left[\binom{2b}{j-i+b-a}-\binom{2b}{a+b+i+j-1}\right]_{1\leq i,j\leq n},$$
while maintaining the lingering assertion
$$\det U_n(a,b)=\prod_{i=1}^n\prod_{j=1}^{b-a}\frac{(a+b+i-j)(a+b+2i+j-2)}{(a+b+2i-j)(i+j-1)}.$$
{\bf Step 3.} Call Dodgson's condensation on $U_n(a,b)$ because this matrix is in a natural form for such an application. 
\begin{proof} Denoting the determinant by $M_n(a,b)$, Dodgson's condensation states that
$$M_n(a,b)=\frac{M_{n-1}(a,b)M_{n-1}(a+1,b+1)-M_{n-1}(a+1,b)M_{n-1}(a,b+1)}{M_{n-2}(a+1,b+1)}.$$
Therefore, we only need to verify (routinely) that the right-hand side does the same. To conclude, simply check both sides agree, say, when $n=1$ and $n=2$.
\end{proof}

\end{document}

\noindent
Well, why stop here: catch the $q$-disease. Recall $[n]_q!=\prod_{i=1}^n\frac{1-q^i}{1-q}$ and the {\em Gaussian polynomials} (or $q$-binomial coefficients) given by $\binom{n}k_q=\frac{[n]_q!}{[k]_q!\,[n-k]_q!}$. We are geared up to generalizing $T_{n,m}(x)$, by introducing three more parameters $a, b$ and $q$, in accordance to
$$\mathbf{T}_{n,m,a,b}(x;q):=\left[q^{u_{i,j}}\binom{x+m}{j+b-i-a+m}_q-q^{v_{i,j}}\binom{x+m}{m-i-a-j-b+1}_q\right]_{i,j=1}^n$$
where $u_{i,j}=(j+b)(j-i+b-a)$ and $v_{i,j}=(j+b)(j+i+b+a+x-m-1)$.

\smallskip
\noindent
{\bf Convention.} Define $\binom{t}s_q=0$ when $t<s$ or $t<0$; also $\prod_{i=s}^t(\cdot)=0$ while $t<s$.

\begin{proposition} Let $\varphi(n,a,b)=\frac12(b-a)(2b+n+1)n$ and $x$ be an indeterminate. If $m\leq a+b$ and $b\geq a-1$ then we have
\begin{align*}
\det \mathbf{T}_{n,m,a,b}(x;q)
=q^{\varphi(n,a,b)}\prod_{i=1}^n\prod_{j=1}^{m+b-a}\frac{(1-q^{x+m+i-j})(1-q^{x+2i+j+a-b-2})}{(1-q^{x+m+2i-j-1})(1-q^{i+j-1})}.
\end{align*}
If $a=b=0$ then it holds that
\begin{align*}
\det \mathbf{T}_{n,m,0,0}(x;q)
=\prod_{i=1}^n\prod_{j=1}^m\frac{(1-q^{x+i-j})(1-q^{x+2i+j+a-b-2})}{(1-q^{x+2i-j})(1-q^{i+j-1})}.
\end{align*}
\end{proposition}
\begin{proof} These are yet - other - application of Dodgson's condensation.
\end{proof}